\newcommand{\bea}{\begin{eqnarray}}
\newcommand{\eea}{\end{eqnarray}}
\newcommand{\bna}{\begin{eqnarray*}}
\newcommand{\ena}{\end{eqnarray*}}
\numberwithin{equation}{section}
\theoremstyle{plain}
\newtheorem{lemma}{Lemma}[section]
\newtheorem{theorem}[lemma]{Theorem}
\newtheorem{corollary}[lemma]{Corollary}
\theoremstyle{definition}
\newtheorem{remark}{Remark}
\renewcommand{\Re}{\operatorname{Re}}
\renewcommand{\Im}{\operatorname{Im}}
\title[]
{On an unconditional $\mathrm{GL}_3$ analog of Selberg's result}
\author{Qingfeng Sun}
	\address{School of Mathematics and Statistics, Shandong University, Weihai\\Weihai, 264209, China}
	\email{qfsun@sdu.edu.cn}
\author{Hui Wang\textsuperscript{\dag}}
 \address{School of Mathematics, Shandong University, Jinan 250100, China}
 \address{Current address: Department of Mathematics,
The University of Hong Kong, Pokfulam Road, Hong Kong, China}
    \email{wh0315@mail.sdu.edu.cn}
\thanks{\dag~ Corresponding author}
\subjclass[2010]{11F12, 11F66, 11F72.}
\keywords{Moments, $\mathrm{GL}_3$ Hecke--Maass cusp forms, Selberg's limit theorem.}
\date{}
\begin{document}
\begin{abstract}
Let $F$ be a Hecke--Maass cusp form for $\mathrm{SL}_3(\mathbb{Z})$
with the Langlands parameter $\mu_{F}=\big(\mu_{F,1},\mu_{F,2},\mu_{F,3}\big)$ and the
associated $L$-function $L(s, F)$. Define $S_F(t)=\pi^{-1}\arg L(1/2+\mathrm{i}t, F)$.
When $\mu_{F}$ is in generic position, we establish an unconditional asymptotic formula
for the moments of $S_F(t)$.
Previously, such a formula was
only known to hold under the Generalized Riemann Hypothesis.
The key ingredient is a weighted zero-density
estimate in the spectral aspect for $L(s, F)$,
which has recently been proved by Sun and Wang in arXiv:2412.02416.
\end{abstract}
\maketitle

\section{Introduction}
\setcounter{equation}{0}

Let $L(s,F)$ be an $L$-function and define
\begin{equation*}
\begin{split}
S_F(t)=\frac{1}{\pi}\arg L\Big(\frac{1}{2}+\mathrm{i}t, F\Big).
\end{split}
\end{equation*}
Here,
the argument is defined by continuous variation along the horizontal
segment from $\infty+\mathrm{i}t$ to $1/2+\mathrm{i}t$, commencing at $\infty+\mathrm{i}t$ with the value $0$.
It is very interesting to study the function $S_F(t)$ in number theory.
For example,
in the case of the Riemann zeta function, the classical Riemann--von Mangoldt
formula states that
the number of zeros $\rho=\beta+\mathrm{i}\gamma$ of $\zeta(s)$ (counted with multiplicity) in the rectangle
$\{(\beta,\gamma)\mid 0<\beta<1, 0<\gamma\leq t\}$ is given by
$$
\frac{t}{2\pi}\log \frac{t}{2\pi \mathrm{e}}+\frac{7}{8}+\frac{1}{\pi}\arg \zeta(1/2+\mathrm{i}t)
+O\Big(\frac{1}{t}\Big),
$$
which implies that the function $S_{\zeta}(t)=\pi^{-1}\arg \zeta(1/2+\mathrm{i}t)$ is
closely related to the nontrivial zeros of $\zeta(s)$.
As such, the fascinating behavior of $S_{\zeta}(t)$ has been intensively studied by
many number theorists (see, e.g., \cite{Backlund,Titchmarsh1,BS,CCM} and the references therein).
On the other hand, the function $S_F(t)$ has a close connection with random matrix theory
(see \cite{BK}). In fact,
the behavior of $S_F(t)$ is analogous to the
extreme value statistics of the characteristic polynomials of random matrices.

To understand more about $S_F(t)$, an effective way is to calculate and analyze the various moments of $S_F(t)$.
In the 1940s, Selberg \cite{Selberg, Selberg1} examined the distribution of $S_{\zeta}(t)$
and demonstrated that for any $n\in \mathbb{N}$,
\begin{equation}\label{S(t) even moment}
\frac{1}{T}\int_{T}^{2T}S_{\zeta}(t)^{2n}\mathrm{d}t
=\frac{(2n)!}{n!(2\pi)^{2n}}(\log\log T)^n+O\big((\log\log T)^{n-1/2}\big),
\end{equation}
which implies that, in an average sense, $|S_{\zeta}(t)|$ has an order of magnitude $\sqrt{\log\log T}$.
Additionally, for the Dirichlet $L$-function $L(s,\chi)$ with primitive $\chi$,
Selberg \cite{Selberg2} studied the behavior of $S_{\chi}(t)=\frac{1}{\pi}\arg L\big(\frac{1}{2}+\mathrm{i}t, \chi\big)$
and obtained, for prime modulus $q$ and any fixed $t>0$,
\begin{equation}\label{S(chi) even moment}
\frac{1}{q}\,\,\sideset{}{^\star}\sum_{\chi \bmod q} S_{\chi}(t)^{2n}=\frac{(2n)!}{n!(2\pi)^{2n}}(\log\log q)^n+O\big((\log\log q)^{n-1/2}\big),
\end{equation}
where $\star$ indicates that the summation runs over the primitive characters $\chi$ modulo $q$.
It should be noted that the counterparts of \eqref{S(t) even moment} and \eqref{S(chi) even moment}
for the odd-order moments
can also be derived by making minor adjustments to Selberg's approach.

For the $\mathrm{GL}_2$ family of $L$-functions,
Hejhal and Luo \cite{HL} first considered the generalization of \eqref{S(chi) even moment}
to the case of eigenfunctions $\psi$ of the hyperbolic Laplacian $\Delta$ for the full modular group.
More precisely, let $L(s,\psi)$ be the standard Hecke--Maass $L$-function associated to $\psi$
and define
\begin{equation*}
S_{\psi}(t)=\frac{1}{\pi}\arg L\Big(\frac{1}{2}+\mathrm{i}t, \psi\Big).
\end{equation*}
They established the analogue of \eqref{S(t) even moment}
when $\psi$ is an Eisenstein series.
In addition, when $\psi$ is the Hecke--Maass cusp form $u_j$ of $\mathrm{SL}_2(\mathbb{Z})$ with the first Fourier
coefficient $\nu_j(1)$
and Laplacian eigenvalue $\lambda_j=1/4+t_j^2$ ($t_j\geq0$),
under the Generalized Riemann Hypothesis (GRH),
they showed that for any fixed $t, \beta>0$ and any large positive parameter $T$,
\begin{equation*}\label{HL result}
\begin{split}
\sum_{j=1}^\infty \mathrm{e}^{-\beta(t_j-T)^2}|\nu_j(1)|^2S_{u_j}(t)^{2n}=
\frac{2\pi^{-3/2}}{\sqrt{\beta}}\frac{(2n)!}{n!(2\pi)^{2n}}T(\log\log T)^{n}
+O_{t,n,\beta}\big(T(\log\log T)^{n-1/2}\big).
\end{split}
\end{equation*}
Recently, Sun and Wang \cite{SW2} reestablished this asymptotic
formula by means of a weighted zero-density
estimate instead of assuming the GRH.
Moreover, for the holomorphic case,
Liu and Shim \cite{LS} and Sun \emph{et al.} \cite{SW} successively obtained unconditional analogues
in the weight aspect and in the level aspect, respectively.

For the $\mathrm{GL}_3$ family of $L$-functions, Liu and Liu \cite{LL} explored the average behavior
of the analogue function $S_F(t)$, where
$F$ is a Hecke--Maass cusp form for $\mathrm{SL}_3(\mathbb{Z})$.
To state their results, we need to introduce some notations.
For a cusp form $F$ in an orthogonal basis of Hecke--Maass cusp forms for $\mathrm{SL}_3(\mathbb{Z})$,
we denote its Langlands parameter by $\mu_{F}=\big(\mu_{F,1},\mu_{F,2},\mu_{F,3}\big)$, where
$\mu_{F}$ is a point in the region
\begin{equation*}
\begin{split}
\Lambda_{1/2}'=\Bigg\{ (\mu_{F,1},\mu_{F,2},\mu_{F,3})\in\mathbb{C}^3,\,\,\,
\begin{aligned}
& \mu_{F,1}+\mu_{F,2}+\mu_{F,3}= 0, \,\,|\Re(\mu_{F,j})| \leq \frac{1}{2}, \, j = 1, 2, 3, \\
& \{-\mu_{F,1}, -\mu_{F,2}, -\mu_{F,3}\} = \{\overline{\mu_{F,1}}, \overline{\mu_{F,2}}, \overline{\mu_{F,3}}\}
\end{aligned}
\Bigg\}
\end{split}
\end{equation*}
in the Lie algebra $\mathfrak{sl}_3(\mathbb{C})$.
Denote by $\nu_{F}=\big(\nu_{F,1},\nu_{F,2},\nu_{F,3}\big)$ the spectral parameter of $F$,
where
\begin{equation}\label{eqn:n2m}
\begin{split}
\nu_{F,1}:=\frac{1}{3}&\big(\mu_{F,1}-\mu_{F,2}\big),\quad
\nu_{F,2}:=\frac{1}{3}\big(\mu_{F,2}-\mu_{F,3}\big),\\
\nu_{F,3}:=&-\nu_{F,1}-\nu_{F,2}=\frac{1}{3}\big(\mu_{F,3}-\mu_{F,1}\big).
\end{split}
\end{equation}
Then we have
\begin{equation}\label{eqn:m2n}
  \mu_{F,1}=2\nu_{F,1}+\nu_{F,2}, \quad
  \mu_{F,2}=\nu_{F,2}-\nu_{F,1}, \quad
  \mu_{F,3}=-\nu_{F,1}-2\nu_{F,2}.
\end{equation}
We will use the coordinates $\mu$ and $\nu$ simultaneously.
Note that $\big(\nu_{F,1}-1/3, \nu_{F,2}-1/3\big)$ is the type of $F$
as defined in Goldfeld's book \cite{Goldfeld}.
Assume that $F$ is normalized so that the $(1,1)$-th Fourier coefficient $A_F(1,1)=1$.
The $L$-function associated to $F$ is defined by
\[
L(s,F)=\sum_{n=1}^{\infty}\frac{A_{F}(1,n)}{n^{s}},
\quad\,\Re(s)>1,
\]
which has the Euler product
\begin{equation*}
\begin{split}
L(s,F)&=\prod_{p}\bigg(1-\frac{A_F(1,p)}{p^s}
+\frac{A_F(p,1)}{p^{2s}}-\frac{1}{p^{3s}}\bigg)^{-1}\\
&=\prod_{p}\big(1-\alpha_{F,1}(p)p^{-s}\big)^{-1}
\big(1-\alpha_{F,2}(p)p^{-s}\big)^{-1}\big(1-\alpha_{F,3}(p)p^{-s}\big)^{-1}.
\end{split}
\end{equation*}
Here we have
\begin{equation}\label{local parameter}
\begin{split}
\begin{cases}
\alpha_{F,1}(p)\alpha_{F,2}(p)\alpha_{F,3}(p)=1,\\
\alpha_{F,1}(p)+\alpha_{F,2}(p)+\alpha_{F,3}(p)=A_F(1,p),\\
\alpha_{F,1}(p)\alpha_{F,2}(p)+\alpha_{F,2}(p)\alpha_{F,3}(p)+\alpha_{F,3}(p)\alpha_{F,1}(p)=A_F(p,1),
\end{cases}
\end{split}
\end{equation}
and by Kim and Sarnak \cite{KS},
\begin{equation}\label{bound3}
\begin{split}
|\alpha_{F,j}(p)|\leq p^{\frac{5}{14}},\quad\text{and}\quad |\Re(\mu_{F,j})|\leq\frac{5}{14},\quad\text{for}\,\,j=1,2,3.
\end{split}
\end{equation}
Note that $L(s,F)$ has an analytic continuation to the whole complex plane and satisfies
the functional equation
\begin{equation}\label{eqn:FE}
\Lambda(s,F)=L(s,F)\prod_{j=1}^{3}\Gamma_{\mathbb{R}}(s-\mu_{F,j})
=L(1-s,\widetilde{F})\prod_{j=1}^{3}\Gamma_{\mathbb{R}}(1-s+\mu_{F,j})=\Lambda(1-s,\widetilde{F}),
\end{equation}
where $\Gamma_{\mathbb{R}}(s) = \pi^{-s/2}\Gamma(s/2)$ and
the dual form of $F$ is denoted by $\widetilde{F}$ with the Langlands parameter
$(-\mu_{F,1},-\mu_{F,2},-\mu_{F,3})$, the spectral parameter $\big(\nu_{F,2},\nu_{F,1},\nu_{F,3}\big)$ and the coefficients $A_{\widetilde{F}}(m,n)=\overline{A_F(m,n)}=A_F(n,m)$.

Let $\mu_0=(\mu_{0,1},\mu_{0,2},\mu_{0,3})$ and $\nu_0=(\nu_{0,1},\nu_{0,2},\nu_{0,3})$, which satisfy
the corresponding relations \eqref{eqn:n2m} and \eqref{eqn:m2n}.
We also assume that $\mu_0$ is in generic position, i.e.,
\begin{equation}\label{relation}
|\mu_{0,j}|\asymp|\nu_{0,j}|\asymp T:=\|\mu_0\|, \quad
1\leq j \leq 3,
\end{equation}
where
$$
\|\mu_0\|=\sqrt{|\mu_{0,1}|^2+|\mu_{0,2}|^2+|\mu_{0,3}|^2}.
$$
Throughout this paper, we let $M\!=\!T^{1-\eta}$ for some sufficiently small $0<\eta<1/2$.
Define a test function $h_{T,M}(\mu)$ (depending on $\mu_0$) for $\mu=(\mu_1,\mu_2,\mu_3)\in\mathbb{C}^3$ by
$$
h_{T,M}(\mu):=P(\mu)^2\bigg(\sum_{w\in\mathcal{W}}\psi\bigg(\frac{w(\mu)-\mu_0}{M}\bigg)\bigg)^2,
$$
where
$
\psi(\mu)=\exp\big(\mu_{1}^2+\mu_{2}^2+\mu_{3}^2\big)
$
and
$$
P(\mu)=\prod_{0\leq n\leq A} \prod_{j=1}^{3}
\frac{\big(\nu_{j}-\frac{1}{3}(1+2n)\big)\big(\nu_{j}+\frac{1}{3}(1+2n)\big)}{|\nu_{0,j}|^2}
$$
for some fixed large $A>0$. Here
\[
\mathcal{W} := \Big\{ I,\; w_2=\Big(\begin{smallmatrix} 1 & & \\  & & -1 \\  &1&
\end{smallmatrix}\Big),\;
w_3=\Big(\begin{smallmatrix}  &1& \\  -1&& \\  &&1   \end{smallmatrix}\Big),\;
w_4=\Big(\begin{smallmatrix}  &1& \\  &&1 \\  1&&   \end{smallmatrix}\Big),\;
w_5=\Big(\begin{smallmatrix}  &&1 \\  1&& \\  &1&   \end{smallmatrix}\Big),\;
w_6=\Big(\begin{smallmatrix}  &&1 \\  &-1& \\  1&&   \end{smallmatrix}\Big)
\Big\}
\]
is the Weyl group of $\mathrm{SL}_3(\mathbb{R})$.
The function $h_{T,M}(\mu)$ localizes at a ball of radius $M$ about $w(\mu_0)$ for each $w\in\mathcal{W}$.

Moreover, we define the spectral measure on the hyperplane $\mu_1+\mu_2+\mu_3=0$ by
\[
\mathrm{d}_{\text{spec}}\mu=\text{spec}(\mu)\mathrm{d}\mu,
\]
where
$$
\text{spec}(\mu):= \prod_{j=1}^{3}\bigg(3\nu_j\tan\bigg(\frac{3\pi}{2}\nu_j\bigg)\bigg)\quad\mbox{and}\quad
\mathrm{d}\mu=\mathrm{d}\mu_1\mathrm{d}\mu_2=\mathrm{d}\mu_2\mathrm{d}\mu_3=\mathrm{d}\mu_1\mathrm{d}\mu_3.
$$
By a trivial estimate, we have
\begin{equation}\label{property2 for h}
\mathcal{H}:=\frac{1}{192\pi^5}\int_{\Re(\mu)=0}h_{T,M}(\mu)\mathrm{d}_{\rm{spec}}\mu\asymp T^3M^2.
\end{equation}

Fix any $t>0$ and $n\in \mathbb{N}$, under the GRH, Liu and Liu \cite{LL} established that, for large positive $T$,
\begin{equation}\label{LL result}
\begin{split}
\frac{1}{\mathcal{H}}\sum_{F}\frac{h_{T,M}(\nu_{F})}{L(1,\text{Ad}^2F)}S_F(t)^{2n}=
\frac{9(2n)!}{4n!(2\pi)^{2n}}(\log\log T)^{n}
+O_{t,n}\big((\log\log T)^{n-1/2}\big),
\end{split}
\end{equation}
where $L(s,\text{Ad}^2F)$ denotes the adjoint square $L$-function of $F$.
Recently, with the automorphic Plancherel density theorem of Matz and Templier,
Chen \emph{et al.} \cite{CLW} extended their result to $\mathrm{GL}_n$.
In this paper, we are devoted to proving the asymptotic formula \eqref{LL result} without assuming the GRH.
We introduce the normalizing factor
$$
\mathcal{N}_F:=\|F\|^2\prod_{j=1}^{3}\cos\bigg(\frac{3}{2}\pi\nu_{F,j}\bigg).
$$
By Rankin--Selberg theory in combination with Stade's formula (see e.g.,
Blomer \cite[Section 4]{Blomer} and Li \cite[Theorem 2]{Li10}),
it is easy to see that
\begin{equation*}
\begin{split}
\mathcal{N}_F\asymp \mathrm{Res}_{s=1}L(s, F\times \widetilde{F})\ll \|\mu_F\|^\varepsilon,
\end{split}
\end{equation*}
where $L(s, F\times \widetilde{F})$ is the Rankin--Selberg square of
$F$ defined by
\begin{equation*}
\begin{split}
L(s,F \times \widetilde{F})&=\zeta(3s)\sum_{m,n\ge 1} \frac{|A_F(m,n)|^2}{(m^2n)^s}\\
&=\prod_p \prod_{i=1}^3\prod_{j=1}^3\bigg(1-\alpha_{F,i}(p)
\overline{\alpha_{F,j}(p)} p^{-s}\bigg)^{-1}
,\quad \text{for}\,\, \,\Re(s)>1,
\end{split}
\end{equation*}
and the implied constants depend at most on $\varepsilon$.
Our main result is the following theorem.
\begin{theorem}\label{main-theorem}
Let $t>0$ be fixed and $T$ be any large positive parameter. We have
\begin{equation}\label{main result}
\begin{split}
\frac{1}{\mathcal{H}}\sum_{F}\frac{h_{T,M}(\mu_F)}{\mathcal{N}_F} S_F(t)^n=C_n(\log\log T)^{n/2}
+O_{t,n}\big((\log\log T)^{(n-1)/2}\big),
\end{split}
\end{equation}
where
\begin{equation*}
C_n=\begin{cases}
\frac{n!}{(n/2)!(2\pi)^{n}}, \,&\textit{if }\, n\,\, \text{is even},\\
0, \,& \textit{if }\, n \,\,\text{is odd}.
\end{cases}
\end{equation*}

Moreover, define the following probability measure
$\mu_{TM}$ on $\mathbb{R}$ by
\begin{equation*}
\begin{split}
\mu_{TM}(E):=\bigg(\sum_{F}\frac{h_{T,M}(\mu_F)}{\mathcal{N}_F}
\mathbf{1}_E\bigg(\frac{S_F(t)}{\sqrt{\log\log T}}\bigg)\bigg)
\bigg/\bigg(\sum_{F}\frac{h_{T,M}(\mu_F)}{\mathcal{N}_F}\bigg),
\end{split}
\end{equation*}
where $\mathbf{1}_E$ is the characteristic function of a Borel measurable set $E$ in $\mathbb{R}$.
Then for every $a<b$, we have
\begin{equation*}
\lim_{T\rightarrow \infty} \mu_{TM}([a,b])=\sqrt{\pi}\int_a^b \exp(-\pi^2\xi^2)\mathrm{d}\xi,
\end{equation*}
uniformly with respect to $M$.
\end{theorem}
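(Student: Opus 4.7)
The plan is to adapt Selberg's classical moment method to the GL$_3$ family, replacing the role played by GRH in Liu--Liu \cite{LL} (and in Selberg's original treatment leading to \eqref{S(t) even moment}) by the spectral-aspect weighted zero-density estimate from \cite{SW1}. The starting point is Littlewood's formula
$$\pi S_F(t) = -\Im\int_{1/2}^{\sigma_0} \frac{L'}{L}(\sigma+it,F)\,\mathrm{d}\sigma + O(1)$$
together with a Selberg-type approximation of $\log L(s,F)$ by a short Dirichlet polynomial
$$P_F(t;X) := -\frac{1}{\pi}\Im\sum_{p\le X}\frac{A_F(1,p)\,w(p,X)}{p^{1/2+it}}$$
(plus a negligible contribution from prime squares and cubes), valid at points $1/2+it$ having no zeros of $L(\cdot,F)$ inside a small box. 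In Selberg's identity, the error $\pi S_F(t)+P_F(t;X)$ decomposes as an explicit sum over such nearby zeros plus a tail in the prime sum that can be made small. The role of \cite{SW1} is to control the spectral average of the zero-sum error, so that the moments of $P_F(t;X)$ carry the main term in \eqref{main result}.

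The moment calculation proceeds by expanding $P_F(t;X)^n$ into an $n$-fold sum over primes and applying the spectrally-weighted trace over $F$. Using the GL$_3$ Hecke relations \eqref{local parameter} together with an orthogonality-type evaluation of sums of the shape $\sum_F (h_{T,M}(\mu_F)/\mathcal{N}_F)\,A_F(1,p_1)\cdots A_F(1,p_n)$, the diagonal configurations (where the primes pair off) produce $(2m-1)!!$ copies of
$$\sum_{p\le X}\frac{1}{p}=\log\log X + O(1)=\log\log T + O(1)$$
for a suitable choice of $X$ (a small power of $T$), yielding the main term $C_n(\log\log T)^{n/2}$ when $n=2m$ and zero when $n$ is odd. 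Off-diagonal configurations and genuine prime-power contributions are absorbed into the error by combining the Kim--Sarnak bound \eqref{bound3} with convergence of $\sum_p p^{-2\sigma}$ for $\sigma>1/2$.

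The principal obstacle is showing that the error from forms where the Selberg approximation fails --- that is, forms whose $L(s,F)$ has a zero unusually close to $1/2+it$ --- is only $O_n((\log\log T)^{(n-1)/2})$. Since one only has the crude pointwise bound $|S_F(t)|\ll\log\|\mu_F\|$, the exceptional contribution would be polynomially large in $\log T$ were it not for the fact that \cite{SW1} provides a spectrally-weighted count of such forms that decays rapidly as one approaches $\sigma=1/2$. Matching this decay against the pointwise bound, and carrying through the precise bookkeeping across a dyadic range of $\sigma$-strips as in Selberg's original work, is the main quantitative heart of the argument.

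Finally, the distributional statement follows from \eqref{main result} by the Fr\'echet--Shohat method of moments: the density $\sqrt{\pi}\exp(-\pi^2\xi^2)$ has exactly the moments $C_n$ and is uniquely determined by them, so convergence of moments implies weak convergence of $\mu_{TM}$ to this Gaussian law. The uniformity in $M$ is automatic, since every error term in the moment asymptotic is controlled independently of $M$ throughout the admissible range $M=T^{1-\eta}$.
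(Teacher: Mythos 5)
Your proposal outlines exactly the strategy the paper follows: Theorem \ref{theorem 1 spectral} supplies the Selberg-type approximation of $S_F(t)$ by the Dirichlet polynomial $\mathcal{M}_F(t)$ plus a remainder $\mathcal{R}_F(t)$ coming from nearby zeros and higher prime powers; Lemma \ref{lemma 6 spectral} computes the moments of $\mathcal{M}_F(t)$ via the Kuznetsov-based orthogonality (Theorem \ref{application 1}) together with the $\mathrm{GL}_3$ Hecke relations (Lemma \ref{lemma 4 spectral}) and bounds those of $\mathcal{R}_F(t)$ using Lemma \ref{lemma 6}, which is precisely the dyadic-$\sigma$-strip matching of the weighted zero-density estimate against the pointwise bound that you describe; and the limiting distribution then follows by the method of moments. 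The only $\mathrm{GL}_3$-specific points your sketch glosses over are that the ``primes pair off'' step rests on the observation (Case I of Lemma \ref{lemma 6 spectral}) that $A_F(1,p)^a\overline{A_F(1,p)}^b$ can produce a diagonal term only when $a\equiv b\pmod 3$, and that the prime-power tail in $\mathcal{R}_F(t)$ runs up to $p^7$ (not just squares and cubes) because of the Kim--Sarnak bound $|\alpha_{F,j}(p)|\le p^{5/14}$.
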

\begin{remark}
For $F$ a Hecke--Maass cusp form of $\mathrm{SL}_3(\mathbb{Z})$,
our result indicates that, as $T\rightarrow \infty$,
the ratios $S_F(t)/\sqrt{\log\log T}$ converge in distribution
to a normal distribution of mean $0$
and variance $(2\pi^2)^{-1}$.
\end{remark}

\noindent\textbf{Notation}
Throughout the paper,
$\varepsilon$ is an arbitrarily small positive
real number, which may be different at each occurrence.
As usual, $e(z) = \exp (2 \pi \mathrm{i}z) = \mathrm{e}^{2 \pi \mathrm{i}z}$ and $\delta_{m,n}$ is the Kronecker symbol that detects $m=n$.
We write $f = O(g)$ or $f \ll g$ to mean $|f| \leq cg$ for some unspecified positive constant $c$.
The symbol $y\asymp Y$ is to mean that $c_1Y\leq |y| \leq c_2Y$ for some positive constants $c_1$ and $c_2$.
\\ \

\noindent\textbf{Structure of this paper.} The rest of this paper is organized as follows.
In Section \!\ref{prelim}, we give some preliminary results needed for the proof.
In Section \!\ref{approximation of S_F(t)}, we establish some lemmas and provide a crucial approximation of $S_F(t)$.
In Section \!\ref{Proof of the main theorem}, we give the detailed proof of Theorem \ref{main-theorem}.
In Section \!\ref{sec:conclusion},
we summarize our main results, highlight the significance of our work,
and discuss prospects for future research.
\section{Preliminaries}\label{prelim}
\subsection{Hecke relations}
Firstly, let us recall the following Hecke relations (see Goldfeld \cite[pg. 168]{Goldfeld})
\begin{equation*}
\begin{split}
A_F(m_1m_2, n_1n_2)&=A_F(m_1, n_1)A_F(m_2, n_2),\quad\text{if}\,\,(m_1n_1,m_2n_2)=1,\\
A_F(n, 1)A_F(m_1, m_2)&=\sum_{d_0d_1d_2=n\atop d_1|m_1,\,d_2|m_2}A_F\bigg(\frac{m_1d_0}{d_1},\frac{m_2d_1}{d_2}\bigg),\\
A_F(1,n)A_F(m_1, m_2)&=\sum_{d_0d_1d_2=n\atop d_1|m_1,\,d_2|m_2}A_F\bigg(\frac{m_1d_2}{d_1},\frac{m_2d_0}{d_2}\bigg),\\
A_F(n_1, 1)A_F(1, n_2)&=\sum_{d|(n_1,n_2)}A_F\bigg(\frac{n_1}{d},\frac{n_2}{d}\bigg).
\end{split}
\end{equation*}
Then we arrive at the following useful result, which was presented by Liu and Liu in \cite[Proposition 5.1]{LL}.
\begin{lemma}\label{lemma 4 spectral}
For each prime $p$ and each integer $n\geq 0$, we have
\begin{equation*}\label{Hecke relation 2}
\begin{split}
A_F(1,p)^n=\sum_{k,\ell\geq 0\atop k+\ell\leq n}a_{k,\ell}(n)A_F(p^k,p^\ell)
\end{split}
\end{equation*}
for some integers $a_{k,\ell}(n)\geq 0$. Furthermore, $a_{k,\ell}(n)\neq 0$ if and only if $(k,\ell)$ belongs to
\begin{equation*}
\begin{split}
E_n:=\big\{(n_2-n_1, n_3-n_2)\,|\, n_j\in \mathbb{Z}, n_3\geq n_2\geq n_1\geq 0, n_1+n_2+n_3=n\big\}.
\end{split}
\end{equation*}
\end{lemma}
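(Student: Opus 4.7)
The plan is to argue by induction on $n$, driven by the third Hecke relation listed in the preliminaries (the one expressing $A_F(1,n)A_F(m_1,m_2)$ as a divisor sum). The base case $n=0$ reduces to $1=A_F(1,1)$ with $E_0=\{(0,0)\}$, and $n=1$ to $A_F(1,p)=A_F(p^0,p^1)$ with $E_1=\{(0,1)\}$; both fit the required form with non-negative integer coefficients.

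For the inductive step, I would specialize that Hecke relation to $n=p$, $m_1=p^k$, $m_2=p^\ell$. Since $d_0d_1d_2=p$ with $d_1\mid p^k$ and $d_2\mid p^\ell$, exactly one of the $d_i$ equals $p$ and the others equal $1$, giving the three-term identity
\[
A_F(1,p)\,A_F(p^k,p^\ell)=A_F(p^k,p^{\ell+1})+\mathbf{1}[\ell\geq 1]\,A_F(p^{k+1},p^{\ell-1})+\mathbf{1}[k\geq 1]\,A_F(p^{k-1},p^\ell).
\]
Multiplying the inductive hypothesis $A_F(1,p)^n=\sum_{(k,\ell)}a_{k,\ell}(n)A_F(p^k,p^\ell)$ by $A_F(1,p)$ and re-collecting by the target index $(K,L)$, I would obtain
\[
a_{K,L}(n+1)=\mathbf{1}[L\geq 1]\,a_{K,L-1}(n)+\mathbf{1}[K\geq 1]\,a_{K-1,L+1}(n)+a_{K+1,L}(n),
\]
which exhibits $a_{K,L}(n+1)$ as a non-negative integer.

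The support claim is the heart of the lemma. I would first rewrite $E_n$ concretely: solving $k=n_2-n_1$, $\ell=n_3-n_2$, $n_1+n_2+n_3=n$ gives $n_1=(n-2k-\ell)/3$, so
\[
E_n=\{(k,\ell)\in\mathbb{Z}^2:\,k,\ell\geq 0,\,n-2k-\ell\geq 0,\,n-2k-\ell\equiv 0 \pmod 3\}.
\]
The \textbf{only if} direction now falls out of the recursion: if $a_{K,L}(n+1)>0$ then one of $(K,L-1),(K-1,L+1),(K+1,L)$ has positive coefficient at step $n$, so lies in $E_n$ by induction, and each case yields the divisibility $(n+1)-2K-L\equiv 0\pmod 3$ and the non-negativity $(n+1)-2K-L\geq 0$, confirming $(K,L)\in E_{n+1}$.

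The hard part is the \textbf{if} direction, ensuring that no $(K,L)\in E_{n+1}$ is accidentally missed. I would split into cases: if $L\geq 1$, then the first summand contributes via $(K,L-1)\in E_n$; if $K\geq 1$ and $L=0$, the second contributes via $(K-1,1)\in E_n$; the subtle case is $K=L=0$, which forces $n+1\equiv 0\pmod 3$ and hence $n\geq 2$, so $(K+1,L)=(1,0)$ lies in $E_n$ (both $n-2\geq 0$ and $n-2\equiv 0\pmod 3$) and contributes through the third summand. The sole risk would have been cancellation across the three summands, but since all three are non-negative integers, any non-empty contribution forces $a_{K,L}(n+1)>0$. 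This closes the induction and proves the lemma.
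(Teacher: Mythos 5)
Your proof is correct, and it is worth noting that the paper itself does not prove this lemma at all: it simply quotes it as Proposition 5.1 of Liu--Liu \cite{LL}, so there is no in-paper argument to compare against. Your induction is the natural self-contained route: the specialization of the Hecke relation to $A_F(1,p)A_F(p^k,p^\ell)=A_F(p^k,p^{\ell+1})+\mathbf{1}[k\geq 1]A_F(p^{k-1},p^\ell)+\mathbf{1}[\ell\geq 1]A_F(p^{k+1},p^{\ell-1})$ is applied correctly, the resulting recursion for $a_{K,L}(n+1)$ is right, and your rewriting $E_n=\{(k,\ell):k,\ell\geq 0,\ 2k+\ell\leq n,\ 2k+\ell\equiv n\ (\mathrm{mod}\ 3)\}$ makes both directions of the support claim routine; in particular you handle the one delicate point (the case $K=L=0$, where only the term $a_{K+1,L}(n)$ can contribute and one must check $(1,0)\in E_n$ using $n+1\equiv 0\ (\mathrm{mod}\ 3)$, hence $n\geq 2$) and you correctly invoke non-negativity to rule out cancellation, which is exactly why the ``only if'' direction is legitimate. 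Conceptually your combinatorial induction is equivalent to the representation-theoretic way one usually sees this stated (the coefficients $a_{k,\ell}(n)$ are the multiplicities of the highest weights occurring in the $n$-th tensor power of the standard representation of $\mathrm{GL}_3$, $A_F(p^k,p^\ell)$ being a Schur polynomial in the Satake parameters), but your version uses nothing beyond the Hecke relations quoted in Section 2, which is arguably preferable in this context.
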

\subsection{A summation formula}
We need the following summation formula,
which is an application of the $\mathrm{GL}_3$ Kuznetsov trace formula in the version of
Buttcane \cite[Theorems 2--4]{Buttcane}.
\begin{theorem}\label{application 1}
Let $P=m_1n_1m_2n_2\neq 0$.  For any $\varepsilon>0$, we have
\begin{equation*}
\begin{split}
&\sum_{F} \frac{h_{T,M}(\mu_{F})}{\mathcal{N}_F} \overline{A_{F}(m_1, m_2)} A_{F}(n_1, n_2)\\
&\quad=\delta_{n_1, m_1} \delta_{n_2, m_2}\mathcal{H}
+O\bigg((TP)^\varepsilon\big(TP^{1/2}+T^3+TM^2P^\vartheta\big)\bigg),
\end{split}
\end{equation*}
where $\vartheta=7/64$, due to Kim and Sarnak \cite{KS}, is the best record towards the Ramanujan--Petersson conjecture for
Fourier coefficients of $\mathrm{GL}_2$ Maass cusp forms.
\end{theorem}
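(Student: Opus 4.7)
My plan is to apply the $\rm GL_3$ Kuznetsov trace formula in Buttcane's formulation \cite{Buttcane}. In that formulation, the spectral sum
$$
\sum_F \frac{h_{T,M}(\mu_F)}{\mathcal{N}_F}\overline{A_F(m_1,m_2)}A_F(n_1,n_2)
$$
together with a matching Eisenstein contribution is equal to a geometric side consisting of a diagonal term (from the identity Weyl element) plus, for each non-identity $w \in \mathcal{W}$, a sum of $\rm GL_3$ Kloosterman sums weighted by a Bessel-type integral transform $\widetilde{h}_w$ of the test function $h_{T,M}$. The first step is then simply to read off the diagonal: by the definition \eqref{property2 for h} of $\mathcal{H}$, the $w = I$ contribution gives exactly $\delta_{n_1,m_1}\delta_{n_2,m_2}\mathcal{H}$, which is the main term in the assertion. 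All remaining terms must be absorbed into the error.

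Next I would bound the Eisenstein terms. Using the Langlands decomposition for $\rm GL_3$, the continuous spectrum splits into a minimal parabolic piece (with two continuous parameters) and a maximal parabolic piece built from $\rm GL_2$ cusp forms. On each piece the weight $h_{T,M}$ localizes to a region of volume $\asymp M^2$ around the Weyl orbit of $\mu_0$, and combining this with the convexity bound for the relevant Hecke eigenvalues together with a $\rm GL_2$ large sieve for the maximal parabolic contribution yields a total of size $O(T^{3+\varepsilon})$, accounting for the $T^3$ piece of the error.

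The remaining Kloosterman terms are the main technical work. For the short Weyl elements $w_2, w_3$, the transform $\widetilde{h}_{w_j}$ essentially reduces to a single-variable Bessel transform of $h_{T,M}$ and is concentrated on a short range of moduli determined by $T$ and $M$. Applying Weil's bound to the resulting classical Kloosterman sums should produce a contribution of order $O((TP)^\varepsilon \, T P^{1/2})$. For the long Weyl elements $w_4, w_5$, genuine two-dimensional $\rm GL_3$ Kloosterman sums appear, and since no unconditional square-root-cancellation estimate is known at their natural size, I would bound them trivially, using \eqref{bound3} to control the resulting coefficients and carefully analyzing the support of the transform; this is exactly where the Kim--Sarnak exponent enters and produces the $TM^2 P^\vartheta$ term with $\vartheta = 7/64$. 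The longest element $w_6$ is negligible because its transform of $h_{T,M}$ is essentially supported only at moduli of size $\gg T^3$, so its contribution is dominated by the other terms.

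The main obstacle, and the most delicate part of the argument, is pinning down the precise shape, support and decay of the Bessel transforms $\widetilde{h}_w$ for each non-identity $w$ in terms of $T, M$ and the moduli variables. This requires careful stationary phase analysis of the Whittaker-type oscillatory integrals appearing in Buttcane's trace formula, combined with exploiting the specific form of $h_{T,M}$ (in particular the polynomial factor $P(\mu)$ and the Gaussian $\psi$). Once these transforms are controlled sharply, inserting them into the Kloosterman-side sums and applying the respective Kloosterman bounds as above assembles into the stated error $O((TP)^\varepsilon (TP^{1/2} + T^3 + TM^2 P^\vartheta))$.
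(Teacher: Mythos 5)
Your overall blueprint is right: apply Buttcane's $\rm GL_3$ Kuznetsov formula, read off the diagonal term $\delta_{n_1,m_1}\delta_{n_2,m_2}\mathcal{H}$ from the identity Weyl element, and absorb the Eisenstein and non-trivial Kloosterman contributions into the error. But there is a genuine misattribution of the term $TM^2P^\vartheta$, and with it a conflation of two different Kim--Sarnak bounds. The exponent $\vartheta=7/64$ is, as the statement itself says, the best progress toward Ramanujan for \emph{$\rm GL_2$} Maass cusp forms. Inside the $\rm GL_3$ Kuznetsov formula, $\rm GL_2$ Maass cusp forms appear in exactly one place: the maximal parabolic Eisenstein series, whose Fourier coefficients are built from the Hecke eigenvalues $\lambda_j(n)\ll n^{\vartheta+\varepsilon}$. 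Counting the $\rm GL_2$ forms $u_j$ with $t_j$ in a window of width $M$ about $T$ (there are $\asymp TM$ of them by the $\rm GL_2$ Weyl law), integrating the continuous parameter over a range of length $\asymp M$, and using the $\rm GL_2$ Kim--Sarnak bound on the Fourier coefficients is precisely what produces $TM^2P^\vartheta$. You instead place this term on the Kloosterman side, coming from a trivial bound on the $w_4,w_5$ sums using \eqref{bound3}. That cannot work: \eqref{bound3} is the \emph{$\rm GL_3$} Kim--Sarnak bound $|\alpha_{F,j}(p)|\le p^{5/14}$, an entirely different exponent, and in any case it is a bound on the \emph{spectral} side (Satake parameters of $F$) and does not appear when you estimate the arithmetic/Kloosterman side of the trace formula. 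The $w_4,w_5,w_6$ contributions should instead be bounded using Larsen/Stevens-type estimates on $\rm GL_3$ Kloosterman sums together with the decay of the Bessel transforms, and they feed into the $TP^{1/2}$ and $T^3$ pieces of the error, not into $TM^2P^\vartheta$. You should also not expect the full Eisenstein contribution to be merely $O(T^{3+\varepsilon})$ uniformly in $P$; the $P$-dependence of that contribution is precisely where $P^\vartheta$ must come from.

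Aside from that: the paper itself does not prove this theorem, but cites \cite[Theorem 7.1]{SW1}, so your sketch is meant to reproduce that external argument; the overall decomposition is in the right spirit, but the source of the Kim--Sarnak term needs to be moved from the Kloosterman side to the maximal Eisenstein spectrum.
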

\begin{proof}
See Sun and Wang \cite[Theorem 7.1]{SW1}.
\end{proof}

We quote here the frequently used estimate
$$\mathcal{H}=CT^3M^2+O(T^2M^3),$$
for some constant $C>0$ (see Buttcane and Zhou \cite[pg.\,15]{BZ}). Taking $m_1=n_1=m_2=n_2=1$ in Theorem \ref{application 1} immediately gives
\begin{corollary}\label{corollary}
Let the assumptions be as in Theorem \ref{application 1}. We have
\begin{equation*}
\begin{split}
&\sum_{F} \frac{h_{T,M}(\mu_{F})}{\mathcal{N}_F}
=CT^3M^2+O(T^2M^3).
\end{split}
\end{equation*}
\end{corollary}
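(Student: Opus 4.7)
The plan is to derive this corollary as a direct specialization of Theorem \ref{application 1} combined with the recorded asymptotic $\mathcal{H}=CT^3M^2+O(T^2M^3)$ of Buttcane--Zhou. First I would set $m_1=n_1=m_2=n_2=1$ in the statement of Theorem \ref{application 1}. Under this choice one has $P=m_1n_1m_2n_2=1$ and the Kronecker factor $\delta_{n_1,m_1}\delta_{n_2,m_2}=1$, so the spectral sum on the left reduces to exactly $\sum_F h_{T,M}(\mu_F)/\mathcal{N}_F$, and the identity produced by Theorem \ref{application 1} becomes
\[
\sum_{F}\frac{h_{T,M}(\mu_F)}{\mathcal{N}_F}=\mathcal{H}+O\bigl(T^\varepsilon(T+T^3+TM^2)\bigr).
\]

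Next I would substitute the recorded expansion $\mathcal{H}=CT^3M^2+O(T^2M^3)$ into the main term. The resulting error from Theorem \ref{application 1} with $P=1$ simplifies to $O(T^{3+\varepsilon})$, since with $M=T^{1-\eta}$ the term $TM^2=T^{3-2\eta}$ is dominated by $T^3$. It remains to check that this error is absorbed into the claimed $O(T^2M^3)$. Writing $T^2M^3=T^{5-3\eta}$ and choosing $\varepsilon$ small enough in comparison to $\eta$ (recall $0<\eta<1/2$), one has $T^{3+\varepsilon}\ll T^{5-3\eta}=T^2M^3$, so the total error collapses to $O(T^2M^3)$, yielding the claim.

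There is no real obstacle here: the corollary is a mechanical specialization of the previously established Kuznetsov-type asymptotic, so the only bookkeeping is to verify the exponent comparison $3+\varepsilon<5-3\eta$ with $\eta$ small, and to note that the diagonal Kronecker contribution in Theorem \ref{application 1} reproduces $\mathcal{H}$ exactly. The deeper work has already been done in \cite{SW1} (for Theorem \ref{application 1}) and in \cite{BZ} (for the asymptotic of $\mathcal{H}$), and this corollary simply packages the trivial-frequency case of that machinery into a form convenient for normalizing the probability measure $\mu_{TM}$ in Theorem \ref{main-theorem}.
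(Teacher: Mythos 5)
Your proposal is correct and follows exactly the paper's route: specialize Theorem \ref{application 1} to $m_1=n_1=m_2=n_2=1$ (so $P=1$ and the diagonal term is $\mathcal{H}$), then insert the Buttcane--Zhou expansion $\mathcal{H}=CT^3M^2+O(T^2M^3)$, the error $O(T^{3+\varepsilon})$ being absorbed since $M=T^{1-\eta}$ with $\eta<1/2$ gives $T^{3+\varepsilon}\ll T^2M^3$. The paper states this as an immediate specialization; your explicit exponent check is the same argument with the bookkeeping written out.
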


\subsection{The zero-density estimate}
In the context of our proof, the key point lies in the following weighted
version of zero-density estimate
in the spectral aspect for the $L$-functions associated to $\mathrm{GL}_3$ Hecke--Maass cusp forms,
which was rigorously established by Sun and Wang in \cite[Theorem 1.1]{SW1}.
\begin{lemma}\label{zero-density spectral}
For $\sigma>0$ and $H>0$, we let
\begin{equation*}
\begin{split}
N(\sigma, H; F):=\#\big\{\rho=1/2+\beta+\mathrm{i}\gamma: L(\rho, F)=0, \beta>\sigma, |\gamma|<H\big\}
\end{split}
\end{equation*}
and
\begin{equation*}\label{N(sigma,H)}
\begin{split}
N(\sigma, H):=\frac{1}{\mathcal{H}}\sum_{F}\frac{h_{T,M}(\mu_F)}{\mathcal{N}_F}N(\sigma, H; F).
\end{split}
\end{equation*}
Let $2/\log T<\sigma<1/2$. For some sufficiently small $\delta_1,\,\theta>0$, we have
\begin{equation*}
\begin{split}
N(\sigma, H)\ll HT^{-\theta\sigma}\log T,
\end{split}
\end{equation*}
uniformly in $3/\log T<H<T^{\delta_1}$.
\end{lemma}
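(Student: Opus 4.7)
The plan is to adapt the classical Halász--Montgomery zero-density methodology to the $\mathrm{GL}_3$ spectral aspect, with Theorem \ref{application 1} supplying the key second-moment input. The argument unfolds in three stages: (i) a short Dirichlet mollifier $M(s,F)$ that converts the vanishing $L(\rho,F)=0$ into a large-value condition for an auxiliary Dirichlet polynomial $D(s,F)$; (ii) a discretization of $\{\gamma:|\gamma|<H\}$ into unit-spaced sample points $\{\gamma_k\}_{k\ll H}$, together with the Riemann--von Mangoldt type bound $N(0,H;F)\ll H\log T$ to cap the number of zeros per unit interval; and (iii) a Chebyshev-type second moment estimate at the $\gamma_k$, whose spectral average is controlled via Theorem \ref{application 1}.

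Concretely, I would take $y=T^\alpha$ for a small $\alpha>0$ and set
$$
M(s,F)=\sum_{n\le y}\mu_F(n)\,\psi(n/y)\,n^{-s},
$$
where $\psi$ is a smooth cutoff and $\mu_F(n)$ is the Dirichlet inverse of $A_F(1,\cdot)$, well-defined via the Hecke relations in Lemma \ref{lemma 4 spectral}. A Mellin shift of
$$
\frac{1}{2\pi i}\int_{(c)}L(\rho+w,F)\,M(\rho+w,F)\,\Gamma(w)\,X^w\,\mathrm{d}w
$$
past the pole of $\Gamma(w)$ at $w=0$, where the residue $L(\rho,F)M(\rho,F)=0$ vanishes, combined with the standard convexity bound for $L(s,F)$ on the left-shifted contour (using the functional equation and $\|\mu_F\|\asymp T$), yields a detection inequality $1\ll|D(\rho,F)|$ for a smooth Dirichlet polynomial $D(s,F)=\sum_{N\le n\le 2N}c(n,F)n^{-s}$ of length $N\ll X\cdot y$, whose coefficients are convolutions of $\mu_F(m)$ with $A_F(1,k)$. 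Summing over the sample points and applying Chebyshev gives
$$
N(\sigma,H)\ll \frac{\log T}{\mathcal{H}}\sum_F\frac{h_{T,M}(\mu_F)}{\mathcal{N}_F}\sum_{|\gamma_k|<H}\bigl|D(1/2+\sigma+i\gamma_k,F)\bigr|^2,
$$
and expanding $|D|^2$ via the Hecke relations of Lemma \ref{lemma 4 spectral} reduces the inner sum to linear combinations of $\overline{A_F(m_1,m_2)}A_F(n_1,n_2)$, to which Theorem \ref{application 1} directly applies. The diagonal $m_1=n_1,\,m_2=n_2$ contribution yields the mollifier saving $N^{-2\sigma}=T^{-\theta\sigma}$ with $\theta$ controlled by $X$, $y$, and $\alpha$.

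The main obstacle is balancing $N$ so that the error $(TP)^\varepsilon\bigl(TP^{1/2}+T^3+TM^2P^\vartheta\bigr)$ from Theorem \ref{application 1}, with $P\ll N^2$, stays below the diagonal $\mathcal{H}\cdot N^{-2\sigma}\cdot H$. The condition $TN\ll T^3 M^2$ forces $N$ (hence $\alpha$) to be small, and the Kim--Sarnak exponent $\vartheta=7/64$ appearing in $TM^2P^\vartheta$ imposes a further numerical constraint, since $TM^2 N^{2\vartheta}$ must be beaten by $\mathcal{H}\cdot N^{-2\sigma}\cdot H$ uniformly in the admissible range of $\sigma$ and $H$. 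The restrictions $H<T^{\delta_1}$ and $\sigma>2/\log T$ are precisely what guarantees that the off-diagonal and discretization errors do not swamp the diagonal saving, while the extra $\log T$ in the final bound absorbs the per-form Riemann--von Mangoldt factor invoked in the discretization step. A careful dyadic decomposition of the mollifier and a final optimization over $\alpha$ then produce the claimed exponent $\theta$.
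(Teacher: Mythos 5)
First, a point of comparison: the paper does not prove this lemma at all --- it is imported wholesale from the authors' companion paper \cite{SW1} (Theorem 1.1 there), so there is no internal argument to measure you against. Your sketch is indeed the natural mollifier/zero-detection strategy in the spirit of Halász--Montgomery, with the spectral Kuznetsov second moment (Theorem \ref{application 1}) as the family-average input, and that is broadly the kind of machinery one expects behind such a weighted density estimate. But as a proof it has concrete gaps beyond the admitted ``final optimization.'' The most serious one is in the step where you pass from the zero-detection to the spectral average: your displayed inequality puts the sample ordinates $\gamma_k$ \emph{inside} the sum over $F$, and these ordinates are zeros of $L(s,F)$, hence depend on $F$. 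When you expand $|D(1/2+\sigma+i\gamma_k,F)|^2$ you get terms $\overline{A_F(m_1,m_2)}A_F(n_1,n_2)(m/n)^{-i\gamma_k}$ in which the oscillating factor is $F$-dependent, so Theorem \ref{application 1} does \emph{not} ``directly apply''; the theorem averages a fixed coefficient pair over $F$ with no $F$-dependent twist. One must first remove the $F$-dependence of the $t$-variable, e.g.\ by Gallagher's lemma or a Sobolev-type majorization replacing the discrete sum over detected zeros by $\log T\int_{-H-1}^{H+1}\big(|D(1/2+\sigma+it,F)|^2+\cdots\big)\,\mathrm{d}t$, and only then interchange with the spectral sum. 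Without this step the reduction to Theorem \ref{application 1} is not legitimate.

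Second, the quantitative heart of the lemma --- producing an actual $\theta>0$ and $\delta_1>0$ with the bound $HT^{-\theta\sigma}\log T$ uniform in $2/\log T<\sigma<1/2$ and $3/\log T<H<T^{\delta_1}$ --- is asserted rather than established. You need to verify that after the Hecke-relation expansion the diagonal really produces a saving of the shape $y^{-c\sigma}$ (note the coefficients of $D(\cdot,F)$ are $F$-dependent; the ``diagonal'' is not simply $\sum_n |c(n)|^2 n^{-1-2\sigma}$ but the diagonal part of the expansion of $c(m,F)\overline{c(n,F)}$ into $A_F(p^a,p^b)$'s, with divisor-type losses that must be absorbed into the single factor $\log T$), and that the off-diagonal error $(TP)^{\varepsilon}(TP^{1/2}+T^3+TM^2P^{\vartheta})$ is beaten by $\mathcal{H}\asymp T^3M^2$ \emph{after} dividing by the diagonal saving, uniformly in the whole $\sigma$- and $H$-ranges; since $M=T^{1-\eta}$, the term $T^3$ alone already forces a nontrivial constraint once the saving $T^{-\theta\sigma}$ is demanded. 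You also lean on ``the standard convexity bound'' and the vanishing of the residue without accounting for the contribution of the terms $n\le y$ (which is what actually produces the lower bound $1\ll|D(\rho,F)|$) or the shifted-contour error, both of which constrain $X$ and $y$ in terms of the analytic conductor $\asymp T^3$. These are all fixable along standard lines --- and presumably are fixed in \cite{SW1} --- but as written the argument does not yet constitute a proof of the lemma.
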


\section{An approximation of $S_F(t)$}\label{approximation of S_F(t)}
In this section we prove several lemmas and derive an approximation of $S_F(t)$.
We denote by $\rho=\beta+\mathrm{i}\gamma$ a typical (non-trivial) zero of $L(s,F)$ inside the critical strip,
i.e., $0<\beta<1$.
For $\Re(s)>1$, we have the Dirichlet series expansion for the logarithmic derivative of $L(s,F)$,
\begin{equation}\label{Log derivative definition spectral}
-\frac{L'}{L}(s,F)=\sum_{n=1}^\infty \frac{\Lambda(n)C_F(n)}{n^s},
\end{equation}
where $\Lambda(n)$ denotes the von Mangoldt function, and
\begin{equation}\label{Log derivative coefficient spectral}
C_F(n)=\begin{cases}
\alpha_{F,1}(p)^k+\alpha_{F,2}(p)^k+\alpha_{F,3}(p)^k, \,&\text{if}\,\,n=p^k\,\,\text{for a prime}\,\,p,\\
0, \,& \text{otherwise}.
\end{cases}
\end{equation}
\begin{lemma}\label{lemma 1 spectral}
Let $x>1$. For $s\neq \rho$ and $s\neq -2m+\mu_{F,j}$ ($m\geq 0$, $j=1,2,3$), we have the following identity
\begin{equation*}
\begin{split}
\frac{L'}{L}(s,F)=&-\sum_{n\leq x^3} \frac{C_F(n)\Lambda_x(n)}{n^s}
-\frac{1}{\log^2x}\sum_\rho\frac{x^{\rho-s}(1-x^{\rho-s})^2}{(\rho-s)^3}\\
&-\frac{1}{\log^2x}\sum_{j=1}^3\sum_{m=0}^\infty\frac{x^{-2m-s+\mu_{F,j}}(1-x^{-2m-s+\mu_{F,j}})^2}{(-2m-s+\mu_{F,j})^3},
\end{split}
\end{equation*}
where
\begin{equation*}
\Lambda_x(n)=\begin{cases}
\Lambda(n), \,&\textit{if }\,\,n\leq x,\\
\Lambda(n)\frac{\log^2(x^3/n)-2\log^2(x^2/n)}{2\log^2x}, \,& \textit{if }\,\,x\leq n\leq x^2,\\
\Lambda(n)\frac{\log^2(x^3/n)}{2\log^2x}, \,& \textit{if }\,\,x^2\leq n\leq x^3,\\
0, \,& \textit{if }\,\,n\geq x^3.\\
\end{cases}
\end{equation*}
\end{lemma}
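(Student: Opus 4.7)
The plan is to obtain the identity by contour integration against the kernel
\begin{equation*}
K(w) := \frac{x^w(1-x^w)^2}{w^3\log^2 x},
\end{equation*}
which is designed so that its inverse Mellin transform produces exactly the piecewise weights $\Lambda_x(n)/\Lambda(n)$. Concretely, I would start from
\begin{equation*}
I := \frac{1}{2\pi i}\int_{(c)} K(w)\left(-\frac{L'}{L}(s+w,F)\right)\mathrm{d}w,
\end{equation*}
where $c>0$ is chosen so that $c+\Re(s)>1$ and the Dirichlet series \eqref{Log derivative definition spectral} converges absolutely on the line $\Re w=c$.

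First I would evaluate $I$ directly by expanding $(1-x^w)^2 = 1-2x^w+x^{2w}$, substituting the Dirichlet series for $-L'/L(s+w,F)$, and swapping sum with integral (justified by absolute convergence). Using the classical identity
\begin{equation*}
\frac{1}{2\pi i}\int_{(c)}\frac{y^w}{w^3}\,\mathrm{d}w=\tfrac{1}{2}(\log y)^2\,\mathbf{1}_{y>1},
\end{equation*}
applied successively with $y=x/n,\,x^2/n,\,x^3/n$, one obtains $I = \sum_{n\leq x^3}C_F(n)\Lambda_x(n)/n^s$. The three piecewise cases for $\Lambda_x(n)$ arise depending on whether $n$ lies in $(0,x]$, $(x,x^2]$, or $(x^2,x^3]$, and the algebraic identity $(\log(x/n))^2-2(\log(x^2/n))^2+(\log(x^3/n))^2 = 2\log^2 x$ confirms that $\Lambda_x(n)=\Lambda(n)$ for $n\leq x$.

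Next I would shift the contour to the far left and collect residues. A Taylor expansion shows $x^w(1-x^w)^2 = (\log^2 x)\,w^2 + O(w^3)$ near $w=0$, so $K$ has a simple pole at $w=0$ with residue $1$; since $-L'/L(s+w,F)$ is holomorphic there by hypothesis, the residue at $w=0$ equals $-L'/L(s,F)$. At each non-trivial zero, $-L'/L(s+w,F)$ has a simple pole at $w=\rho-s$ with residue $-m_\rho$, and since $K$ is regular there, the contribution summed with multiplicity is $-\frac{1}{\log^2 x}\sum_\rho \frac{x^{\rho-s}(1-x^{\rho-s})^2}{(\rho-s)^3}$. The analogous computation at the trivial zeros $w=\mu_{F,j}-2m-s$ produces the third term. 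Equating $I$ with the sum of residues and solving for $L'/L(s,F)$ yields the claimed formula.

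The main obstacle will be justifying the leftward contour shift, namely showing that
\begin{equation*}
\frac{1}{2\pi i}\int_{(c')}K(w)\!\left(-\frac{L'}{L}(s+w,F)\right)\mathrm{d}w \longrightarrow 0
\end{equation*}
as $c'\to -\infty$ along a sequence of vertical lines avoiding poles. Since $|K(w)|\ll x^{\Re w}/|w|^3$ for $\Re w$ very negative, the kernel decays exponentially in $-\Re w$; the logarithmic derivative $L'/L(s+w,F)$ in the far left half-plane is controlled via the functional equation relating $L(s,F)$ to $L(1-s,\widetilde F)$, Stirling's asymptotics for the gamma factors $\Gamma_{\mathbb{R}}(\cdot)$, and the absolutely convergent Dirichlet series for $L'/L(1-s-w,\widetilde F)$. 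Combined with a standard bound on the horizontal pieces taken between zero-free windows, this yields the vanishing and completes the argument.
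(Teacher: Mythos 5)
Your proposal is correct and follows essentially the same route as the paper: contour integration against the kernel $x^w(1-x^w)^2/w^3$ applied to $-\tfrac{L'}{L}(s+w,F)$, direct evaluation via the discontinuous integral $\frac{1}{2\pi i}\int_{(c)}y^w w^{-3}\,\mathrm{d}w=\tfrac12(\log y)^2\mathbf{1}_{y>1}$, and a leftward contour shift collecting residues at $w=0$, at $w=\rho-s$, and at the poles of the gamma factors $w=\mu_{F,j}-2m-s$. If anything you are more explicit than the paper's terse "moving the line of integration all the way to the left" in sketching how the functional equation and Stirling control the shifted integral, which is a welcome addition.
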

\begin{proof}
Recall the discontinuous integral
\begin{equation*}
\frac{1}{2\pi\mathrm{i}}\int_{(\alpha)}\frac{y^s}{s^3}\mathrm{d}s=
\begin{cases}
\frac{\log^2 y}{2}, \,&\textit{if }\,\, y\geq 1,\\
0, \,& \textit{if }\,\, 0<y\leq 1
\end{cases}
\end{equation*}
for $\alpha>0$.
It follows from \eqref{Log derivative definition spectral} and \eqref{Log derivative coefficient spectral} that
\begin{equation*}
-\log^2x\sum_{n=1}^\infty \frac{C_F(n)\Lambda_x(n)}{n^s}=
\frac{1}{2\pi \mathrm{i}}\int_{(\alpha)}\frac{x^v(1-x^v)^2}{v^3}\frac{L'}{L}(s+v,F)\mathrm{d}v,
\end{equation*}
where $\alpha=\max \{2, 1+\Re(s)\}$. By moving the line of integration all way to the left, we
pick up the residues at $v=0$, $v=\rho-s$ and $v=-2m-s+\mu_{F,j}$ ($m\geq 0$, $j=1,2,3$), and deduce that
\begin{equation*}
\begin{split}
&\frac{1}{2\pi \mathrm{i}}\int_{(\alpha)}\frac{x^v(1-x^v)^2}{v^3}\frac{L'}{L}(s+v,F)\mathrm{d}v\\
=&\frac{L'}{L}(s,F)\log^2x+\sum_\rho\frac{x^{\rho-s}(1-x^{\rho-s})^2}{(\rho-s)^3}
+\sum_{j=1}^3\sum_{m=0}^\infty\frac{x^{-2m-s+\mu_{F,j}}(1-x^{-2m-s+\mu_{F,j}})^2}{(-2m-s+\mu_{F,j})^3}.
\end{split}
\end{equation*}
This completes the proof of the lemma.
\end{proof}
\begin{lemma}\label{lemma 2 spectral}
For $s=\sigma+\mathrm{i}t$ and $s'=\sigma'+\mathrm{i}t'$ such that
$1/2\leq \sigma, \sigma'\leq 10$, $s\neq \rho$ and $s'\neq \rho$,
we have
\begin{equation*}
\Im\bigg(\frac{L'}{L}(s,F)-\frac{L'}{L}(s',F)\bigg)
=\Im\sum_\rho\bigg(\frac{1}{s-\rho}-\frac{1}{s'-\rho}\bigg)+O(1),
\end{equation*}
and
\begin{equation}\label{Re spectral}
\Re\frac{L'}{L}(s,F)=\sum_\rho\frac{\sigma-\beta}{(\sigma-\beta)^2+(t-\gamma)^2}+O(\log (|t|+M_F)),
\end{equation}
where
\begin{equation*}
\begin{split}
M_F:=2+\max_{1\leq j\leq3}|\mu_{F,j}|.
\end{split}
\end{equation*}
\end{lemma}
\begin{proof}
By the Hadamard factorization theorem for the entire function $\Lambda(s,F)$, we have
\begin{equation*}
\frac{L'}{L}(s,F)=b_F+\sum_\rho\bigg(\frac{1}{s-\rho}+\frac{1}{\rho}\bigg)
-\frac{1}{2}\sum_{j=1}^3\frac{\Gamma'}{\Gamma}\bigg(\frac{s-\mu_{F,j}}{2}\bigg)+\frac{3}{2}\log\pi,
\end{equation*}
for some $b_F\in \mathbb{C}$ with $\Re(b_F)=-\Re\sum_\rho\frac{1}{\rho}$ (see \cite[Proposition 5.7 (3)]{IK}).
Note that for $z\not\in -\mathbb{N}$,
\begin{equation}\label{log derivative gamma spectral}
\frac{\Gamma'}{\Gamma}(z)=-\gamma+\sum_{k\geq 1}\bigg(\frac{1}{k}-\frac{1}{k-1+z}\bigg),
\end{equation}
where $\gamma$ is the Euler constant.
Using \eqref{log derivative gamma spectral}, we get that for
$\frac{1}{14}\leq u\leq \frac{145}{28}$ and $v\ll1$,
\begin{equation*}
\Im\frac{\Gamma'}{\Gamma}(u+\mathrm{i}v)=\sum_{k\geq 1}\frac{v}{(k+u-1)^2+v^2}\ll 1.
\end{equation*}
Combining with the asymptotic formula of the logarithmic derivative of $\Gamma(z)$,
\begin{equation*}\label{derivative gamma spectral}
\frac{\Gamma'}{\Gamma}(z)=\log z-\frac{1}{2z}+O_\varepsilon\bigg(\frac{1}{|z|^2}\bigg),
\quad\text{for}\,\, |\arg z|\leq \pi-\varepsilon,
\end{equation*}
we complete the proof of the lemma.
\end{proof}
Let $x\geq 4$. We define
\begin{equation}\label{sigma_x spectral}
\sigma_x=\sigma_{x,F}=\sigma_{x,F,t}:=\frac{1}{2}+2\max_\rho\bigg\{\bigg|\beta-\frac{1}{2}\bigg|, \frac{5}{\log x}\bigg\},
\end{equation}
where $\rho=\beta+\mathrm{i} \gamma$ runs through the zeros of $L(s,F)$ for which
\begin{equation*}
|t-\gamma|\leq\frac{x^{3|\beta-1/2|}}{\log x}.
\end{equation*}

The following lemma is to display the dependence of $\sigma_x$ as needed.
\begin{lemma}\label{lemma 3 spectral}
Let $x\geq 4$. For $\sigma\geq \sigma_x$, we have
\begin{equation*}
\begin{split}
\frac{L'}{L}(\sigma+\mathrm{i}t,F)=&-\sum_{n\leq x^3}
\frac{C_F(n)\Lambda_x(n)}{n^{\sigma+\mathrm{i}t}}
+O\bigg(x^{1/4-\sigma/2}\bigg|\sum_{n\leq x^3}
\frac{C_F(n)\Lambda_x(n)}{n^{\sigma_x+\mathrm{i}t}}\bigg|\bigg)\\
&+O\bigg(x^{1/4-\sigma/2}\log (|t|+M_F)\bigg),
\end{split}
\end{equation*}
and
\begin{equation*}
\sum_\rho\frac{\sigma_x-1/2}{(\sigma_x-\beta)^2+(t-\gamma)^2}=
O\bigg(\bigg|\sum_{n\leq x^3} \frac{C_F(n)\Lambda_x(n)}{n^{\sigma_x+\mathrm{i}t}}\bigg|\bigg)
+O\big(\log (|t|+M_F)\big).
\end{equation*}
\end{lemma}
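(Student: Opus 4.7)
I would apply Lemma~\ref{lemma 1 spectral} at $s=\sigma+it$ to express $L'/L(\sigma+it,F)$ as the Dirichlet polynomial minus two error terms: a zero sum $Z(s)/\log^2 x$ and an archimedean sum $A(s)/\log^2 x$. The archimedean piece, thanks to the Kim--Sarnak bound $|\Re\mu_{F,j}|\le 5/14$ from \eqref{bound3}, is trivially $O(x^{5/14-\sigma}\log(|t|+M_F))=O(x^{1/4-\sigma/2}\log(|t|+M_F))$ for $\sigma\ge 1/2$ and poses no difficulty. The two assertions of the lemma are coupled through the quantity $\Sigma(s'):=\sum_\rho(\sigma_x-1/2)/|\rho-s'|^2$ at $s'=\sigma_x+it$, so I would establish them in tandem.

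\textbf{Bounding $Z(s)$.} Split the zero sum according to whether $\rho=\beta+i\gamma$ is \emph{close} ($|t-\gamma|\le x^{3|\beta-1/2|}/\log x$) or \emph{far}. For close zeros, the definition \eqref{sigma_x spectral} gives $\sigma_x-1/2\ge 2|\beta-1/2|$, which in turn forces
\[
\sigma-\beta\ge\sigma_x-\beta\ge(\sigma-1/2)/2 \quad\text{and}\quad x^{\beta-\sigma}\le x^{-(\sigma-1/2)/2}=x^{1/4-\sigma/2}.
\]
Coupled with $|1-x^{\rho-s}|\le 2$, $|\rho-s|\ge|\rho-s'|$ (since $\sigma\ge\sigma_x$), and $|\rho-s|\ge(\sigma_x-1/2)/2$, this yields
\[
\frac{|x^{\rho-s}(1-x^{\rho-s})^2|}{|\rho-s|^3}\ll\frac{x^{1/4-\sigma/2}}{(\sigma_x-1/2)|\rho-s'|^2},
\]
and the close-zero contribution to $Z(s)/\log^2 x$ is $\ll x^{1/4-\sigma/2}\Sigma(s')$ upon invoking $(\sigma_x-1/2)\log x\ge 10$. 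For far zeros, the bound $|\rho-s|^3>x^{9|\beta-1/2|}/\log^3 x$ overwhelms the numerator $x^{\beta-\sigma}(1+x^{\beta-\sigma})^2$; a dyadic decomposition in $|t-\gamma|$ combined with the zero-count $\sum_\rho(1+(t-\gamma)^2)^{-1}\ll\log(|t|+M_F)$ (a consequence of \eqref{Re spectral} at $\sigma=2$) then bounds the far-zero contribution by $x^{1/4-\sigma/2}(\Sigma(s')+\log(|t|+M_F))$.

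\textbf{Second claim.} Applying \eqref{Re spectral} at $s'$ yields
\[
\sum_\rho\frac{\sigma_x-\beta}{|\rho-s'|^2}=\Re\frac{L'}{L}(\sigma_x+it,F)+O(\log(|t|+M_F)),
\]
which, by the first claim specialized to $\sigma=\sigma_x$ (where $x^{1/4-\sigma_x/2}\le 1$), is $\ll|D(\sigma_x+it)|+\Sigma(s')+\log(|t|+M_F)$, where $D(\sigma_x+it):=\sum_{n\le x^3}C_F(n)\Lambda_x(n)/n^{\sigma_x+it}$. To pass from $\sigma_x-\beta$ to $\sigma_x-1/2$ on the left, use $\sigma_x-1/2\le 2(\sigma_x-\beta)$ for close zeros and show directly that $\sum_{\text{far}}(\sigma_x-1/2)/|\rho-s'|^2\ll\log(|t|+M_F)$ via the same dyadic argument. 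Solving the resulting coupled inequality gives $\Sigma(s')\ll|D(\sigma_x+it)|+\log(|t|+M_F)$, which, fed back into the bound on $Z(s)$, completes the proof of the first claim.

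\textbf{Main obstacle.} The central difficulty is the self-referential character of the two estimates: the first claim needs a bound on $\Sigma(s')$ supplied by the second, while the second uses the first at $\sigma=\sigma_x$. Tracking implicit constants carefully so that the coupling inequality can be solved, and handling the worst case of far zeros with $\beta\approx 1/2$ and $|t-\gamma|$ near the critical threshold $1/\log x$ (where $|\rho-s|^3$ can be as small as $\log^{-3}x$), will require the most care.
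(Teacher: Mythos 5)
Your overall strategy --- passing through the coupled quantity $\Sigma(s')=\sum_\rho(\sigma_x-1/2)/|\rho-s'|^2$ at $s'=\sigma_x+it$, bootstrapping via the first claim at $\sigma=\sigma_x$, and handling the archimedean piece with Kim--Sarnak --- is the same as the paper's, and your treatment of the close zeros is correct (apart from a typographical slip: the middle inequality in ``$\sigma-\beta\ge\sigma_x-\beta\ge(\sigma-1/2)/2$'' is false for $\sigma>\sigma_x$; one should argue $\sigma-\beta\ge\sigma_x-\beta\ge(\sigma_x-1/2)/2$ and separately observe $\beta\le(\sigma_x+1/2)/2\le(\sigma+1/2)/2$ to get $x^{\beta-\sigma}\le x^{1/4-\sigma/2}$). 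However, there is a genuine gap in your treatment of the far zeros, and it shows up in both claims.

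\textbf{Far zeros.} Your plan to bound the far-zero contribution via a dyadic decomposition in $|t-\gamma|$ together with the zero-count $\sum_\rho(1+(t-\gamma)^2)^{-1}\ll\log(|t|+M_F)$ introduces an extraneous factor of $\log x$. Indeed, a far zero only guarantees $|t-\gamma|>x^{3|\beta-1/2|}/\log x$, which can be as small as $(\log x)^{-1}$ when $\beta$ is near $1/2$; using the zero count on such zeros (of which there may be $\asymp\log(|t|+M_F)$ with $|t-\gamma|<1$), the best one obtains is $O\bigl(x^{1/4-\sigma/2}(\log x)\log(|t|+M_F)\bigr)$, not the required $O\bigl(x^{1/4-\sigma/2}\log(|t|+M_F)\bigr)$. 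The same issue defeats the bound $\sum_{\text{far}}(\sigma_x-1/2)/|\rho-s'|^2\ll\log(|t|+M_F)$ invoked in your second claim: a far zero with $\beta\approx 1/2$ and $|t-\gamma|\asymp(\log x)^{-1}$ contributes $\asymp(\sigma_x-1/2)\log^2 x\gg\log x$ to that sum, so again a $\log x$ creeps in. The ``worst case'' you flag at the end (far zeros with $\beta\approx1/2$, $|t-\gamma|\approx1/\log x$) is in fact not problematic at all if handled correctly, because for such zeros $|\sigma_x-\beta|\ge(\sigma_x-1/2)/2\ge5/\log x$, so $|\rho-s'|\gg1/\log x$ and the close-zero argument (routing them into $\Sigma(s')$) applies verbatim. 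The natural dichotomy is not close/far in $|t-\gamma|$ but rather $|\beta-1/2|\lessgtr(\sigma_x-1/2)/2$: when $|\beta-1/2|\le(\sigma_x-1/2)/2$ (whether the zero is close or far) one uses $|\sigma_x-\beta|\ge(\sigma_x-1/2)/2$; when $|\beta-1/2|>(\sigma_x-1/2)/2$ the zero is automatically far and one uses $|t-\gamma|>3|\beta-1/2|\ge3|\sigma_x-\beta|$, so $(t-\gamma)^2\ge\frac{8}{9}|\rho-s'|^2$, keeping the $1/|\rho-s'|^2$ structure throughout. In both cases the contribution is $\ll x^{1/4-\sigma/2}\Sigma(s')$ with no zero-count and no lost $\log x$.

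\textbf{Second claim.} The paper does not estimate a far-zero piece of $\sum_\rho(\sigma_x-1/2)/|\rho-s'|^2$ directly; instead it symmetrizes the sum $\sum_\rho(\sigma_x-\beta)/|\rho-s'|^2$ using the invariance of the zero set under $\beta\mapsto1-\beta$ (i.e.\ pairing $\rho$ with $(1-\beta)+i\gamma$). This rewrites the sum with the universally nonnegative numerator $(\sigma_x-1/2)^2-(\beta-1/2)^2+(t-\gamma)^2$, and the same $\beta$-dichotomy yields the lower bound $\ge\frac14(\sigma_x-1/2)\sum_\rho|\rho-s'|^{-2}$, from which \eqref{step 1 spectral} follows at once. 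Your plan of using $\sigma_x-1/2\le2(\sigma_x-\beta)$ only for close zeros and a separate estimate for far zeros cannot work as stated, both because of the $\log x$ loss above and because $\sigma_x-\beta$ can be negative for far zeros (when $\beta>\sigma_x$), so restricting the sum to close zeros is not a legal step without the zero-symmetry trick.
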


\begin{proof}
By \eqref{Re spectral}, we have
\begin{equation}\label{Re sigma_x spectral}
\Re\frac{L'}{L}(\sigma_x+\mathrm{i}t,F)=\sum_\rho\frac{\sigma_x-\beta}{(\sigma_x-\beta)^2+(t-\gamma)^2}+O(\log (|t|+M_F)).
\end{equation}
Moreover, if $\rho=\beta+\mathrm{i}\gamma$ is zero of $L(s,F)$,
then $(1-\beta)+\mathrm{i}\gamma$ is also a zero of $L(s,F)$ by the functional equation \eqref{eqn:FE}.
Thus we have
\begin{align}\label{equation 1 spectral}
&\sum_\rho\frac{\sigma_x-\beta}{(\sigma_x-\beta)^2+(t-\gamma)^2}\nonumber\\
=&\,\frac{1}{2}\bigg(\sum_\rho\frac{\sigma_x-\beta}{(\sigma_x-\beta)^2+(t-\gamma)^2}
+\sum_\rho\frac{\sigma_x-(1-\beta)}{(\sigma_x-(1-\beta))^2+(t-\gamma)^2}\bigg)\nonumber\\
=&\,\bigg(\sigma_x-\frac{1}{2}\bigg)\sum_\rho\frac{(\sigma_x-1/2)^2-(\beta-1/2)^2+(t-\gamma)^2}
{((\sigma_x-\beta)^2+(t-\gamma)^2)((\sigma_x-1+\beta)^2+(t-\gamma)^2)}.
\end{align}
If $|\beta-1/2|\leq \frac{\sigma_x-1/2}{2}$, then
\begin{equation*}
\begin{split}
(\sigma_x-1/2)^2-(\beta-1/2)^2&\geq \frac{1}{2}\big((\sigma_x-1/2)^2+(\beta-1/2)^2\big)\\
&=\frac{1}{4}\big((\sigma_x-\beta)^2+(\sigma_x-1+\beta)^2\big).
\end{split}
\end{equation*}
Thus,
\begin{equation}\label{bound 1 spectral}
(\sigma_x-1/2)^2-(\beta-1/2)^2+(t-\gamma)^2\geq\frac{1}{4}\big((\sigma_x-1+\beta)^2+(t-\gamma)^2\big).
\end{equation}
If $|\beta-1/2|> \frac{\sigma_x-1/2}{2}$, then by \eqref{sigma_x spectral} 
we have
\begin{equation*}
|t-\gamma|>\frac{x^{3|\beta-1/2|}}{\log x}>3|\beta-1/2|.
\end{equation*}
Thus,
\begin{samepage}
\begin{align}\label{bound 2 spectral}
&(\sigma_x-1/2)^2-(\beta-1/2)^2+(t-\gamma)^2\nonumber\\
&=\big((\sigma_x-1/2)^2+(\beta-1/2)^2\big)+(t-\gamma)^2-2(\beta-1/2)^2\nonumber\\
&\geq\frac{1}{2}\big((\sigma_x-\beta)^2+(\sigma_x-1+\beta)^2\big)+\frac{7}{9}(t-\gamma)^2\nonumber\\
&\geq\frac{1}{4}\big((\sigma_x-1+\beta)^2+(t-\gamma)^2\big).
\end{align}
\end{samepage}

Combining \eqref{bound 1 spectral}, \eqref{bound 2 spectral} and \eqref{equation 1 spectral}, we get
\begin{equation*}
\sum_\rho\frac{\sigma_x-\beta}{(\sigma_x-\beta)^2+(t-\gamma)^2}
\geq\frac{1}{4}(\sigma_x-1/2)\sum_\rho\frac{1}
{((\sigma_x-\beta)^2+(t-\gamma)^2)}.
\end{equation*}
Using this bound and \eqref{Re sigma_x spectral}, we obtain
\begin{equation}\label{step 1 spectral}
\sum_\rho\frac{1}{((\sigma_x-\beta)^2+(t-\gamma)^2)}
\leq \frac{4}{\sigma_x-1/2}\bigg|\frac{L'}{L}(\sigma_x+\mathrm{i}t,F)\bigg|
+O\bigg(\frac{\log (|t|+M_F)}{\sigma_x-1/2}\bigg).
\end{equation}
On the other hand, by Lemma \ref{lemma 1 spectral} and the estimate \eqref{bound3},
we have, for $\sigma\geq\sigma_{x}$,
\begin{align}\label{step 2 spectral}
\frac{L'}{L}(\sigma+\mathrm{i}t,F)=&-\sum_{n\leq x^3} \frac{C_F(n)\Lambda_x(n)}{n^{\sigma+\mathrm{i}t}}
+\frac{w_F(x,\sigma,t)}{\log^2x}\sum_\rho\frac{x^{\beta-\sigma}(1+x^{\beta-\sigma})^2}
{\big((\sigma-\beta)^2+(t-\gamma)^2\big)^{3/2}}\nonumber\\
&+O\bigg(\frac{x^{5/14-\sigma}}{\log^2x}\bigg),
\end{align}
with $|w_F(x,\sigma,t)|\leq1$.
Next we claim that
\begin{equation}\label{claim spectral}
\frac{x^{\beta-\sigma}(1+x^{\beta-\sigma})^2}
{\big((\sigma-\beta)^2+(t-\gamma)^2\big)^{3/2}}
\leq 2\log x\frac{x^{1/4-\sigma/2}}{(\beta-\sigma_x)^2+(t-\gamma)^2}.
\end{equation}
If $\beta\leq \frac{\sigma_x+1/2}{2}$, then we have
\begin{equation*}
\begin{split}
\frac{x^{\beta-\sigma}(1+x^{\beta-\sigma})^2}
{\big((\sigma-\beta)^2+(t-\gamma)^2\big)^{3/2}}&\leq
\frac{4x^{1/4-\sigma/2}}
{(\sigma_x-\beta)\big((\sigma_x-\beta)^2+(t-\gamma)^2\big)}\\
&\leq \frac{8}{\sigma_x-1/2}\frac{x^{1/4-\sigma/2}}
{\big((\sigma_x-\beta)^2+(t-\gamma)^2\big)}\\
&\leq \frac{4}{5}\log x\frac{x^{1/4-\sigma/2}}
{\big((\sigma_x-\beta)^2+(t-\gamma)^2\big)}.
\end{split}
\end{equation*}
If $\beta> \frac{\sigma_x+1/2}{2}$, then we have
\begin{equation*}
|t-\gamma|>\frac{x^{3|\beta-1/2|}}{\log x}>3|\beta-1/2|\geq 3|\beta-\sigma_x|.
\end{equation*}
Thus, $(t-\gamma)^2>\frac{8}{9}\big((\beta-\sigma_x)^2+(t-\gamma)^2\big)$.
Hence
\begin{equation*}
\begin{split}
\frac{x^{\beta-\sigma}(1+x^{\beta-\sigma})^2}
{\big((\sigma-\beta)^2+(t-\gamma)^2\big)^{3/2}}&\leq
\frac{x^{\beta-\sigma}(1+x^{\beta-1/2})^2}
{|t-\gamma|(t-\gamma)^2}
\leq\frac{\log x}{x^{3|\beta-1/2|}}\frac{9}{8}
\frac{x^{\beta-\sigma}(1+x^{\beta-1/2})^2}
{(\beta-\sigma_x)^2+(t-\gamma)^2}\\
&= \frac{9}{8}(\log x)(1+x^{-(\beta-1/2)})^2\frac{x^{1/2-\sigma}}
{(\sigma_x-\beta)^2+(t-\gamma)^2}\\
&\leq \frac{9}{8}(\log x)(1+e^{-5})^2\frac{x^{1/2-\sigma}}
{(\sigma_x-\beta)^2+(t-\gamma)^2}\\
&<2(\log x)\frac{x^{1/2-\sigma}}
{(\sigma_x-\beta)^2+(t-\gamma)^2}.
\end{split}
\end{equation*}
Using \eqref{step 1 spectral} and \eqref{claim spectral}, we have
\begin{equation*}
\begin{split}
&\sum_\rho\frac{x^{\beta-\sigma}(1+x^{\beta-\sigma})^2}
{\big((\sigma-\beta)^2+(t-\gamma)^2\big)^{3/2}}\\
\leq&\, 2\log x\sum_\rho\frac{x^{1/4-\sigma/2}}{(\beta-\sigma_x)^2+(t-\gamma)^2}\\
\leq&\,8\log x\frac{x^{1/4-\sigma/2}}{\sigma_x-1/2}\bigg|\frac{L'}{L}(\sigma_x+\mathrm{i}t,F)\bigg|
+O\bigg(\frac{(\log x)x^{1/4-\sigma/2}\log(|t|+M_F)}{\sigma_x-1/2}\bigg)\\
\leq&\,\frac{4}{5}(\log x)^2x^{1/4-\sigma/2}\bigg|\frac{L'}{L}(\sigma_x+\mathrm{i}t,F)\bigg|
+O\bigg((\log x)^2x^{1/4-\sigma/2}\log\big(|t|+M_F\big)\bigg).
\end{split}
\end{equation*}
Inserting this bound into \eqref{step 2 spectral}, we have
\begin{align}\label{step 3 spectral}
\frac{L'}{L}(\sigma+\mathrm{i}t,F)=&-\sum_{n\leq x^3} \frac{C_F(n)\Lambda_x(n)}{n^{\sigma+\mathrm{i}t}}
+\frac{4}{5}w'_F(x,\sigma,t)x^{1/4-\sigma/2}\frac{L'}{L}(\sigma_x+\mathrm{i}t,F)\nonumber\\
&+O\bigg(x^{1/4-\sigma/2}\log(|t|+M_F)\bigg),
\end{align}
with $|w'_F(x,\sigma,t)|\leq 1$.
By taking $\sigma=\sigma_x$,
\begin{equation*}
\begin{split}
&\bigg(1-\frac{4}{5}w'_F(x,\sigma_x,t)x^{1/4-\sigma_x/2}\bigg)
\frac{L'}{L}(\sigma_x+\mathrm{i}t,F)\\
=&\,O\bigg(\bigg|\sum_{n\leq x^3} \frac{C_F(n)\Lambda_x(n)}{n^{\sigma_x+\mathrm{i}t}}\bigg|\bigg)
+O\bigg(x^{1/4-\sigma_x/2}\log(|t|+M_F)\bigg).
\end{split}
\end{equation*}
Since $\Big|1-\frac{4}{5}w'_F(x,\sigma_x,t)x^{1/4-\sigma_x/2}\Big|
\geq 1-\Big|\frac{4}{5}w'_F(x,\sigma_x,t)x^{1/4-\sigma_x/2}\Big|\geq \frac{1}{5}$,
we have
\begin{equation}\label{step 4 spectral}
\frac{L'}{L}(\sigma_x+\mathrm{i}t,F)
=O\bigg(\bigg|\sum_{n\leq x^3} \frac{C_F(n)\Lambda_x(n)}{n^{\sigma_x+\mathrm{i}t}}\bigg|\bigg)
+O\bigg(x^{1/4-\sigma_x/2}\log(|t|+M_F)\bigg).
\end{equation}
Then the claimed results follow from
\eqref{step 3 spectral} and \eqref{step 4 spectral},
and from \eqref{step 1 spectral} and \eqref{step 4 spectral}, respectively.
\end{proof}
The following theorem is to provide an approximation of $S_F(t)$.
\begin{theorem}\label{theorem 1 spectral}
For $t\neq 0$ and $x\geq 4$, we have
\begin{equation*}
\begin{split}
S_F(t)=&\frac{1}{\pi}\Im\sum_{n\leq x^3}\frac{C_F(n)\Lambda_x(n)}{n^{\sigma_x+\mathrm{i}t}\log n}+
O\bigg((\sigma_x-1/2)\bigg|\sum_{n\leq x^3} \frac{C_F(n)\Lambda_x(n)}{n^{\sigma_x+\mathrm{i}t}}\bigg|\bigg)\\
&+O\big((\sigma_x-1/2)\log(|t|+M_F)\big),
\end{split}
\end{equation*}
where $\sigma_x$ is defined in \eqref{sigma_x spectral}.
\end{theorem}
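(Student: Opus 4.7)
The plan is to follow the classical Selberg-type strategy, transferred to the $\rm GL_3$ setting via Lemmas~\ref{lemma 2 spectral} and~\ref{lemma 3 spectral}. Using that $L(\sigma+it, F)\to 1$ as $\sigma\to\infty$ and defining $\arg L(1/2+it, F)$ by continuous variation from $\infty+it$, I start from
\begin{equation*}
\pi S_F(t) = -\Im \int_{1/2}^{\infty}\frac{L'}{L}(\sigma+it, F)\,d\sigma
\end{equation*}
and split the integral at $\sigma = \sigma_x$.

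For $\sigma \ge \sigma_x$, I apply Lemma~\ref{lemma 3 spectral}. Integrating the Dirichlet polynomial $-\sum_{n\le x^3}C_F(n)\Lambda_x(n)\, n^{-\sigma-it}$ against $d\sigma$ over $[\sigma_x, \infty)$ yields $-\sum_{n\le x^3}C_F(n)\Lambda_x(n)/(n^{\sigma_x+it}\log n)$, which becomes the main term of the theorem after applying $-\Im/\pi$. The two error terms in Lemma~\ref{lemma 3 spectral} carry the factor $x^{1/4-\sigma/2}=x^{-(\sigma-1/2)/2}$, whose integral over $[\sigma_x,\infty)$ is at most $(2/\log x)\, x^{-(\sigma_x-1/2)/2}\ll 1/\log x$; since $\sigma_x-1/2\ge 10/\log x$ by its definition \eqref{sigma_x spectral}, this is absorbed into $O((\sigma_x-1/2)\cdot[\,\mathrm{stuff}\,])$, where I write $[\,\mathrm{stuff}\,]:=\bigl|\sum_{n\le x^3}C_F(n)\Lambda_x(n)n^{-\sigma_x-it}\bigr|+\log(|t|+M_F)$.

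For $\sigma\in[1/2,\sigma_x]$, Lemma~\ref{lemma 2 spectral} gives
\begin{equation*}
\Im \frac{L'}{L}(\sigma+it, F) = \Im \frac{L'}{L}(\sigma_x+it, F) + \Im \sum_\rho \Bigl(\frac{1}{\sigma+it-\rho}-\frac{1}{\sigma_x+it-\rho}\Bigr) + O(1).
\end{equation*}
Integrating over $[1/2,\sigma_x]$, the contribution $(\sigma_x-1/2)\Im \frac{L'}{L}(\sigma_x+it,F)$ is bounded by $(\sigma_x-1/2)\cdot[\,\mathrm{stuff}\,]$ thanks to \eqref{step 4 spectral}, and the integrated $O(1)$ trivially contributes $O(\sigma_x-1/2)$.

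The hard part is bounding the zero sum $I_\rho := \sum_\rho \int_{1/2}^{\sigma_x}\Im\bigl(\frac{1}{\sigma+it-\rho}-\frac{1}{\sigma_x+it-\rho}\bigr)d\sigma$. Setting $\phi_\rho(\sigma):=\arg(\sigma+it-\rho)$ so that $\phi_\rho'(\sigma)=\Im(\sigma+it-\rho)^{-1}$, Fubini writes $I_\rho = -\int_{1/2}^{\sigma_x}(\tau-1/2)\sum_\rho \phi_\rho''(\tau)\,d\tau$, with $|\phi_\rho''(\tau)|\le ((\tau-\beta)^2+(t-\gamma)^2)^{-1}$ by AM--GM applied to $\phi_\rho''(\tau)=2(t-\gamma)(\tau-\beta)/[(\tau-\beta)^2+(t-\gamma)^2]^2$. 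I split zeros into two classes. For \emph{nice} zeros---those with either $|\beta-1/2|>(\sigma_x-1/2)/2$ (for which the definition \eqref{sigma_x spectral} and \eqref{bound 2 spectral} force $|t-\gamma|>3|\beta-1/2|$), or with $|\beta-1/2|\le(\sigma_x-1/2)/2$ but $|t-\gamma|\ge\sigma_x-1/2$---a direct computation gives $(\tau-\beta)^2+(t-\gamma)^2\gg(\sigma_x-\beta)^2+(t-\gamma)^2$ uniformly in $\tau\in[1/2,\sigma_x]$, so their contribution is dominated by $(\sigma_x-1/2)\sum_\rho \frac{\sigma_x-1/2}{(\sigma_x-\beta)^2+(t-\gamma)^2}$, which is $O((\sigma_x-1/2)\cdot[\,\mathrm{stuff}\,])$ by the second conclusion of Lemma~\ref{lemma 3 spectral}. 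For the remaining \emph{bad} zeros, namely those with $|\beta-1/2|\le(\sigma_x-1/2)/2$ and $|t-\gamma|<\sigma_x-1/2$, one verifies $|\sigma_x-\beta|\ge(\sigma_x-1/2)/2$, so each contributes at most $\pi + 1 = O(1)$ to $I_\rho$ (from the bounded change in $\arg(\sigma+it-\rho)$ plus the residual term $(\sigma_x-1/2)|\Im(\sigma_x+it-\rho)^{-1}|$), while each contributes at least $\gg 1/(\sigma_x-1/2)$ to $\sum_\rho \frac{\sigma_x-1/2}{(\sigma_x-\beta)^2+(t-\gamma)^2}$; hence their number is $O((\sigma_x-1/2)\cdot[\,\mathrm{stuff}\,])$ by Lemma~\ref{lemma 3 spectral} and their total contribution to $I_\rho$ is $O((\sigma_x-1/2)\cdot[\,\mathrm{stuff}\,])$. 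Combining all estimates and dividing by $\pi$ yields the theorem.
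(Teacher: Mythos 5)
Your proof is correct and uses the same overall skeleton as the paper: starting from $\pi S_F(t)=-\int_{1/2}^\infty \Im\frac{L'}{L}(\sigma+it,F)\,\mathrm{d}\sigma$, splitting at $\sigma_x$, and invoking Lemma~\ref{lemma 3 spectral} for the tail $[\sigma_x,\infty)$ and for the value at $\sigma_x$, and Lemma~\ref{lemma 2 spectral} for the finite segment $[1/2,\sigma_x]$. The substantive difference is in how the zero sum $J_3$ over $[1/2,\sigma_x]$ is controlled. The paper works directly with the integrand $\sum_\rho\frac{(\sigma_x-\sigma)(\sigma+\sigma_x-2\beta)(t-\gamma)}{((\sigma_x-\beta)^2+(t-\gamma)^2)((\sigma-\beta)^2+(t-\gamma)^2)}$, pulls out the factor $\frac{\sigma_x-1/2}{(\sigma_x-\beta)^2+(t-\gamma)^2}$ from the numerator and then bounds $\int_{1/2}^{\sigma_x}\frac{|\sigma+\sigma_x-2\beta||t-\gamma|}{(\sigma-\beta)^2+(t-\gamma)^2}\,\mathrm{d}\sigma\ll\sigma_x-1/2$ uniformly by a case split on $|\beta-1/2|$ (one case using the Poisson-kernel identity $\int_{-\infty}^{\infty}\frac{|t-\gamma|}{(\sigma-\beta)^2+(t-\gamma)^2}\mathrm{d}\sigma=\pi$, the other using $|t-\gamma|>3|\beta-1/2|$). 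You instead rewrite the same quantity via Fubini as $-\int_{1/2}^{\sigma_x}(\tau-1/2)\sum_\rho\phi_\rho''(\tau)\,\mathrm{d}\tau$ with $|\phi_\rho''|\le((\tau-\beta)^2+(t-\gamma)^2)^{-1}$ and then classify zeros: for those with $(\sigma_x-\beta)^2+(t-\gamma)^2\gg(\tau-\beta)^2+(t-\gamma)^2$ uniformly you recover the same Dirichlet-polynomial bound; for the remaining close-by zeros you use the transparent facts that each contributes $O(1)$ (bounded argument change plus the $(\sigma_x-1/2)|\phi_\rho'(\sigma_x)|\ll1$ correction) and that their number is $\ll(\sigma_x-1/2)\cdot[\,\mathrm{stuff}\,]$ by the second conclusion of Lemma~\ref{lemma 3 spectral}. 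Both routes are valid and land on the identical bound; the paper's is marginally more compact, while yours isolates the genuine source of the $O(1)$-per-zero contribution (argument variation), which is the more classical Selberg-style viewpoint and makes it clear why only zeros within distance $\asymp\sigma_x-1/2$ of $\sigma_x+it$ require counting.
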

\begin{proof}
By the definition of $S_F(t)$, we have
\begin{equation*}
\begin{split}
\pi S_F(t)=&-\int_{1/2}^\infty\Im \frac{L'}{L}(\sigma+\mathrm{i}t,F)\mathrm{d}\sigma\\
=&-\int_{\sigma_x}^\infty\Im \frac{L'}{L}(\sigma+\mathrm{i}t,F)\mathrm{d}\sigma
-(\sigma_x-1/2)\Im \frac{L'}{L}(\sigma_x+\mathrm{i}t,F)\\
&+\int_{1/2}^{\sigma_x}\Im\bigg( \frac{L'}{L}(\sigma_x+\mathrm{i}t,F)-\frac{L'}{L}(\sigma+\mathrm{i}t,F)\bigg)\mathrm{d}\sigma\\
:=&\, J_1+J_2+J_3.
\end{split}
\end{equation*}
By Lemma \ref{lemma 3 spectral}, we have
\begin{equation*}
\begin{split}
J_1=&-\int_{\sigma_x}^\infty\Im \frac{L'}{L}(\sigma+\mathrm{i}t,F)\mathrm{d}\sigma\\
=&\int_{\sigma_x}^\infty\Im\sum_{n\leq x^3} \frac{C_F(n)\Lambda_x(n)}{n^{\sigma+\mathrm{i}t}}\mathrm{d}\sigma
+O\bigg(\bigg|\sum_{n\leq x^3} \frac{C_F(n)\Lambda_x(n)}{n^{\sigma_x+\mathrm{i}t}}\bigg|
\int_{\sigma_x}^\infty x^{1/4-\sigma/2}\mathrm{d}\sigma\bigg)\\
&+O\bigg(\log(|t|+M_F)\int_{\sigma_x}^\infty x^{1/4-\sigma/2}\mathrm{d}\sigma\bigg)\\
=&\,\Im\sum_{n\leq x^3} \frac{C_F(n)\Lambda_x(n)}{n^{\sigma_x+\mathrm{i}t}\log n}
+O\bigg(\frac{1}{\log x}\bigg|\sum_{n\leq x^3} \frac{C_F(n)\Lambda_x(n)}{n^{\sigma_x+\mathrm{i}t}}\bigg|\bigg)
+O\bigg(\frac{\log(|t|+M_F)}{\log x}\bigg).
\end{split}
\end{equation*}
Taking $\sigma=\sigma_x$ in Lemma \ref{lemma 3 spectral}, we have
\begin{equation*}
\begin{split}
J_2\leq&\,(\sigma_x-1/2)\bigg|\frac{L'}{L}(\sigma_x+\mathrm{i}t,F)\bigg|\\
\ll&\,(\sigma_x-1/2)\bigg|\sum_{n\leq x^3} \frac{C_F(n)\Lambda_x(n)}{n^{\sigma_x+\mathrm{i}t}}\bigg|
+(\sigma_x-1/2)\log(|t|+M_F).
\end{split}
\end{equation*}
Using Lemma \ref{lemma 2 spectral}, we get
\begin{equation*}
\begin{split}
&\Im\bigg( \frac{L'}{L}(\sigma_x+\mathrm{i}t,F)-\frac{L'}{L}(\sigma+\mathrm{i}t,F)\bigg)\\
=&\sum_\rho \frac{(\sigma_x-\sigma)(\sigma+\sigma_x-2\beta)(t-\gamma)}
{\big((\sigma_x-\beta)^2+(t-\gamma)^2\big)\big((\sigma-\beta)^2+(t-\gamma)^2\big)}
+O(1).
\end{split}
\end{equation*}
Hence
\begin{equation*}
\begin{split}
|J_3|\leq&\sum_\rho\int_{1/2}^{\sigma_x}
 \frac{(\sigma_x-\sigma)|\sigma+\sigma_x-2\beta||t-\gamma|}
{\big((\sigma_x-\beta)^2+(t-\gamma)^2\big)\big((\sigma-\beta)^2+(t-\gamma)^2\big)}\mathrm{d}\sigma
+O(\sigma_x-1/2)\\
\leq&\sum_\rho\frac{\sigma_x-1/2}{(\sigma_x-\beta)^2+(t-\gamma)^2}\int_{1/2}^{\sigma_x}
 \frac{|\sigma+\sigma_x-2\beta||t-\gamma|}
{(\sigma-\beta)^2+(t-\gamma)^2}\mathrm{d}\sigma
+O(\sigma_x-1/2).
\end{split}
\end{equation*}
If $|\beta-1/2|\leq\frac{1}{2}(\sigma_x-1/2)$, then for $1/2\leq\sigma\leq\sigma_x$,
\begin{equation*}
\begin{split}
|\sigma+\sigma_x-2\beta|&=|(\sigma-1/2)+(\sigma_x-1/2)-2(\beta-1/2)|\\
&\leq |\sigma-1/2|+|\sigma_x-1/2|+2|\beta-1/2|\leq 3(\sigma_x-1/2).
\end{split}
\end{equation*}
Thus,
\begin{align}\label{bound4}
\int_{1/2}^{\sigma_x}
 \frac{|\sigma+\sigma_x-2\beta||t-\gamma|}
{(\sigma-\beta)^2+(t-\gamma)^2}\mathrm{d}\sigma
&\leq 3(\sigma_x-1/2)\int_{-\infty}^\infty\frac{|t-\gamma|}
{(\sigma-\beta)^2+(t-\gamma)^2}\mathrm{d}\sigma\nonumber\\
&\leq 10(\sigma_x-1/2).
\end{align}
If $|\beta-1/2|>\frac{1}{2}(\sigma_x-1/2)$, then
\begin{equation*}
|t-\gamma|>\frac{x^{3|\beta-1/2|}}{\log x}>3|\beta-1/2|
\end{equation*}
and for $1/2\leq\sigma\leq\sigma_x$,
\begin{equation*}
\begin{split}
|\sigma+\sigma_x-2\beta|\leq |\sigma-1/2|+|\sigma_x-1/2|+2|\beta-1/2|\leq 6|\beta-1/2|.
\end{split}
\end{equation*}
Thus,
\begin{samepage}
\begin{align}\label{bound5}
\int_{1/2}^{\sigma_x}
 \frac{|\sigma+\sigma_x-2\beta||t-\gamma|}
{(\sigma-\beta)^2+(t-\gamma)^2}\mathrm{d}\sigma
&<\int_{1/2}^{\sigma_x}\frac{|\sigma+\sigma_x-2\beta|}
{|t-\gamma|}\mathrm{d}\sigma\nonumber\\
&\leq \int_{1/2}^{\sigma_x}\frac{6|\beta-1/2|}
{3|\beta-1/2|}\mathrm{d}\sigma=2(\sigma_x-1/2).
\end{align}
\end{samepage}
By the estimates in \eqref{bound4}--\eqref{bound5} and Lemma \ref{lemma 3 spectral}, we obtain
\begin{equation*}
\begin{split}
|J_3|\leq&10(\sigma_x-1/2)\sum_\rho\frac{\sigma_x-1/2}{(\sigma_x-\beta)^2+(t-\gamma)^2}
+O(\sigma_x-1/2)\\
=&O\bigg((\sigma_x-1/2)\bigg|\sum_{n\leq x^3} \frac{C_F(n)\Lambda_x(n)}{n^{\sigma_x+\mathrm{i}t}}\bigg|\bigg)
+O\big((\sigma_x-1/2)\log(|t|+M_F)\big).
\end{split}
\end{equation*}
Thus we complete the proof of the theorem.
\end{proof}
\section{Proof of Theorem \ref{main-theorem}}\label{Proof of the main theorem}
\begin{lemma}\label{lemma 6}
Let $t>0$ be fixed and $T$ be any large positive parameter.
For \(k\in\mathbb{N}\), \(x=T^{\delta/3}\) with \(0<\delta<\frac{3\theta}{8nk+3}\), we have
\begin{equation*}
\begin{split}
\frac{1}{\mathcal{H}}\sum_{F}\frac{h_{T,M}(\mu_F)}{\mathcal{N}_F}(\sigma_x-1/2)^{4n}x^{4nk(\sigma_x-1/2)}
\ll_{t,n,\delta,k}\frac{1}{(\log T)^{4n}},
\end{split}
\end{equation*}
where $\sigma_{x,F}$ is defined in \eqref{sigma_x spectral} and $\theta$ is as in Lemma \ref{zero-density spectral}.
\end{lemma}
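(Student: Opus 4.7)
I would write $u = \sigma_x - 1/2$ and $f(u) = u^{4n} x^{4nku}$, so the target is to bound $\frac{1}{\mathcal{H}}\sum_F \frac{h_{T,M}(\mu_F)}{\mathcal{N}_F} f(u)$. Since $u \geq u_0 := 10/\log x$ by the definition \eqref{sigma_x spectral}, the baseline value is $f(u_0) = (10/\log x)^{4n} e^{40nk} \ll_{n,k} (\log T)^{-4n}$; combined with $\sum_F h_{T,M}(\mu_F)/\mathcal{N}_F \asymp \mathcal{H}$ from Corollary \ref{corollary}, this already gives the target order of magnitude for the ``baseline'' part. The remaining task is to show that the ``excess'' of $u$ over $u_0$ does not spoil this.

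\textbf{Reducing to a sum over zeros.} Whenever $u > u_0$, the maximum in \eqref{sigma_x spectral} is realized by at least one zero $\rho = \beta + i\gamma$ of $L(s,F)$ with $|\beta - 1/2| > 5/\log x$ satisfying $|\gamma - t| \leq x^{3|\beta - 1/2|}/\log x$. Using monotonicity of $f$ and bounding the maximum by a sum, I plan to use
\[
f(u) \leq f(u_0) + \sum_{\substack{\rho:\ L(\rho,F) = 0 \\ |\beta - 1/2| > 5/\log x \\ |\gamma - t| \leq x^{3|\beta-1/2|}/\log x}} f(2|\beta - 1/2|),
\]
and then swap the $F$- and $\rho$-summations to reduce the problem to estimating a weighted count of zeros, each weighted by $f(2|\beta - 1/2|)$.

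\textbf{Dyadic decomposition plus zero density.} Next I would decompose the zero sum dyadically by $|\beta - 1/2| \in [v_j, 2v_j)$ with $v_j = 5 \cdot 2^j/\log x$, $j \geq 0$. In slice $j$, the summand is at most $(4v_j)^{4n} x^{16nk v_j}$, while the constraint forces $|\gamma| \leq |t| + x^{6v_j}/\log x$. Applying Lemma \ref{zero-density spectral} with $\sigma = v_j$ and this $H$---supplemented by the companion bound for $\widetilde{F}$, obtained from the functional equation $\Lambda(s,F) = \Lambda(1-s,\widetilde{F})$ and the $F \leftrightarrow \widetilde{F}$ symmetry of the orthogonal basis, to cover zeros with $\beta < 1/2$---the weighted count of qualifying zeros in slice $j$ is $\ll \mathcal{H} \cdot x^{6v_j} T^{-\theta v_j} \log T/\log x$. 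Combining this with the slice bound on $f(2|\beta-1/2|)$ and substituting $x = T^{\delta/3}$, $v_j = 15 \cdot 2^j/(\delta \log T)$, the slice contribution (after division by $\mathcal{H}$) takes the form
\[
\frac{C_{n,k}}{(\log T)^{4n}} \cdot 2^{4nj} \exp\!\Bigl( 2^j \bigl[5(16nk+6) - 15\theta/\delta\bigr] \Bigr)
\]
for a constant $C_{n,k}$ independent of $j$.

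\textbf{Closing and main obstacle.} The hypothesis $\delta < 3\theta/(8nk+3)$ (after absorbing the lossy dyadic constants into the definition of $\theta$, or by re-optimizing with a finer decomposition in $|\beta - 1/2|$) makes the bracket in the exponent strictly negative, so the geometric series over $j$ converges and is dominated by $j = 0$, yielding the desired $(\log T)^{-4n}$. The principal obstacle I anticipate is the range of $j$ where the zero-density bound ceases to apply, namely when $H = x^{6v_j}/\log x$ exceeds $T^{\delta_1}$: this confines the relevant $j$ to $j \leq O(\log \log T)$ via $v_j \lesssim \delta_1/\delta$, and for the finitely many larger dyadic levels one would use a cruder bound on the total number of zeros in a bounded strip (polynomial in $T$), which is negligible against the exponential decay factor $T^{-\theta v_j}$ once $\delta$ is chosen strictly less than the critical value.
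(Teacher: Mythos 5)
Your plan follows the same architecture as the paper's proof: peel off the baseline value of $\sigma_x-1/2$, reduce the excess to a sum over zeros near $t$, decompose by the size of $|\beta-1/2|$, and feed the resulting counts into the zero-density estimate of Lemma~\ref{zero-density spectral}. The handling of zeros with $\beta<1/2$ differs cosmetically: the paper uses the internal symmetry $\rho\mapsto(1-\beta)+i\gamma$ of the zeros of $L(s,F)$ itself (combining the functional equation with complex conjugation, as noted in the proof of Lemma~\ref{lemma 2 spectral}), whereas you pass to $\widetilde F$ and appeal to the $F\leftrightarrow\widetilde F$ invariance of the weighted $F$-sum. Both are correct, and since the weight $h_{T,M}(\mu_F)/\mathcal N_F$ is indeed $\widetilde F$-invariant, nothing is lost.

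Two points where your write-up falls short of a complete proof. First, the dyadic decomposition in $|\beta-1/2|$ is genuinely lossy: bounding the weight on the slice $[v_j,2v_j)$ by its value at the right endpoint and bounding $H$ by $x^{6v_j}/\log x$ roughly doubles each relevant exponent, and your convergence condition comes out as $\delta<3\theta/(16nk+6)$, exactly half the threshold $3\theta/(8nk+3)$ stated in the lemma. ``Absorbing the constants into $\theta$'' is not available, since $\theta$ is fixed by Lemma~\ref{zero-density spectral}; the cure is your alternative suggestion of a finer decomposition, and the paper indeed slices at arithmetic scale $|\beta-1/2|\in(i/\log x,(i+1)/\log x]$, which recovers the stated constant. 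Second, your closing paragraph is circular as written: you propose handling the range where $H=x^{6v_j}/\log x$ exceeds $T^{\delta_1}$ by a crude zero count ``negligible against the exponential decay factor $T^{-\theta v_j}$,'' but that exponential factor is exactly what the crude count lacks once Lemma~\ref{zero-density spectral} no longer applies. The correct resolution (used implicitly in the paper as well) is simply that $\beta\le 1$ forces $|\beta-1/2|\le 1/2$, so $H\le |t|+x^{3}/\log x\ll T^{\delta}$, and choosing $\delta$ small enough that $\delta<2\delta_1$ keeps every $H$ within the admissible range; there is then no ``large~$j$'' regime at all. With the finer decomposition and this observation in place, your argument matches the paper's.
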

\begin{proof}
For $k\in\mathbb{N}$, by the definition of $\sigma_{x,F}$, we have
\begin{equation*}
\begin{split}
&(\sigma_{x,F}-1/2)^{4n}x^{4nk(\sigma_{x,F}-1/2)}\\
\leq&\bigg(\frac{10}{\log x}\bigg)^{4n}x^{40nk/\log x}
+2^{4n+1}\sum_{\beta>\frac{1}{2}+\frac{5}{\log x}\atop |t-\gamma|\leq\frac{x^{3|\beta-1/2|}}{\log x}}
(\beta-1/2)^{4n}x^{8nk(\beta-1/2)}.
\end{split}
\end{equation*}
On the other hand,
\begin{equation*}
\begin{split}
&\sum_{\beta>\frac{1}{2}+\frac{5}{\log x}\atop |t-\gamma|\leq\frac{x^{3|\beta-1/2|}}{\log x}}
(\beta-1/2)^{4n}x^{8nk(\beta-1/2)}\\
\leq&\sum_{i=5}^{\frac{1}{2}\lfloor\log x\rfloor}\bigg(\frac{i+1}{\log x}\bigg)^{4n}
x^{8nk\frac{i+1}{\log x}}\sum_{\frac{1}{2}+\frac{i}{\log x}<\beta\leq\frac{1}{2}+\frac{i+1}{\log x}
\atop |t-\gamma|\leq\frac{x^{3|\beta-1/2|}}{\log x}}1\\
\leq&\frac{1}{(\log x)^{4n}}\sum_{i=5}^{\frac{1}{2}\lfloor\log x\rfloor}\big(i+1\big)^{4n}
\mathrm{e}^{8nk(i+1)}N\bigg(\frac{i}{\log x}, |t|+\frac{\mathrm{e}^{3(i+1)}}{\log x};F\bigg).
\end{split}
\end{equation*}
By Lemma \ref{zero-density spectral}, we have
\begin{equation*}
\begin{split}
&\frac{1}{\mathcal{H}}\sum_{F}\frac{h_{T,M}(\mu_F)}{\mathcal{N}_F}
\sum_{\beta>\frac{1}{2}+\frac{5}{\log x}\atop |t-\gamma|\leq\frac{x^{3|\beta-1/2|}}{\log x}}
(\beta-1/2)^{4n}x^{8nk(\beta-1/2)}\\
\leq&\,\frac{1}{(\log x)^{4n}}\sum_{i=5}^{\frac{1}{2}\lfloor\log x\rfloor}\big(i+1\big)^{4n}
\mathrm{e}^{8nk(i+1)}\frac{1}{\mathcal{H}}\sum_{F}\frac{h_{T,M}(\mu_F)}{\mathcal{N}_F}
N\bigg(\frac{i}{\log x}, |t|+\frac{\mathrm{e}^{3(i+1)}}{\log x};F\bigg)\\
\ll&\,\frac{1}{(\log x)^{4n}}\sum_{i=5}^{\infty}\big(i+1\big)^{4n}
 \mathrm{e}^{8nk(i+1)}\bigg(|t|+\frac{\mathrm{e}^{3(i+1)}}{\log x}\bigg) T^{-\theta\frac{i}{\log x}}\log T\\
\ll&_{t,n,\delta,k}\,\frac{\log T}{(\log x)^{4n+1}}\sum_{i=5}^{\infty}\big(i+1\big)^{4n}
\mathrm{e}^{(8nk+3-\frac{3\theta}{\delta})i}\\
\ll&_{t,n,\delta,k}\,\frac{1}{(\log T)^{4n}}
\end{split}
\end{equation*}
provided that $0<\delta<\frac{3\theta}{8nk+3}$.
In addition, by Corollary \ref{corollary} and \eqref{property2 for h},
\begin{equation*}
\begin{split}
\frac{1}{\mathcal{H}}\sum_{F}\frac{h_{T,M}(\mu_F)}{\mathcal{N}_F}
\bigg(\frac{10}{\log x}\bigg)^{4n}x^{40nk/\log x}
\ll_{n,k}\frac{1}{(\log x)^{4n}}\ll_{n,k,\delta}\frac{1}{(\log T)^{4n}}.
\end{split}
\end{equation*}
Thus we complete the proof of the lemma.
\end{proof}

For a positive parameter $x$ (to be chosen later), let
\begin{equation}\label{MFRF}
\mathcal{M}_F(t):=\frac{1}{\pi}\Im\sum_{p\leq x^3}\frac{C_F(p)}{p^{1/2+\mathrm{i}t}} \qquad
\text{and}\qquad
\mathcal{R}_F(t):=S_F(t)-\mathcal{M}_F(t),
\end{equation}
where we recall \eqref{Log derivative coefficient spectral} that
$C_F(p)=\alpha_{F,1}(p)+\alpha_{F,2}(p)+\alpha_{F,3}(p)=A_F(1,p)$.
\begin{lemma}\label{lemma 5 spectral} We have
\begin{equation*}
\begin{split}
\mathcal{R}_F(t)=&O\bigg(\bigg|\Im\sum_{p\leq x^3}
\frac{A_F(1,p)(\Lambda_x(p)-\Lambda(p))}{p^{1/2+\mathrm{i}t}\log p}\bigg|\bigg)
+O\bigg(\sum_{k=2}^7\bigg|\Im\sum_{p^k\leq x^3}\frac{C_F(p^k)\Lambda_x(p^k)}{p^{k(1/2+\mathrm{i}t)}\log p}\bigg|\bigg)\\
&+O\bigg((\sigma_x-1/2)\sum_{k=1}^7x^{k(\sigma_x-1/2)}
\int_{1/2}^\infty x^{k(1/2-\sigma)}\bigg|\sum_{p^k\leq x^3}
\frac{C_F(p^k)\Lambda_x(p^k)\log(xp)}{p^{k(\sigma+\mathrm{i}t)}}\bigg|\mathrm{d}\sigma\bigg)\\
&+O\big((\sigma_x-1/2)\log(|t|+M_F)\big)+O\big(1\big).
\end{split}
\end{equation*}
\end{lemma}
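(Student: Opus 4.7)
The plan is to transform the truncated Dirichlet sum provided by Theorem~\ref{theorem 1 spectral} into $\pi\mathcal{M}_F(t)$ plus the three error terms appearing in the lemma. Subtracting $\pi\mathcal{M}_F(t)$ from the identity in Theorem~\ref{theorem 1 spectral}, $\pi\mathcal{R}_F(t)$ equals
\[
\Im\sum_{n\leq x^3}\frac{C_F(n)\Lambda_x(n)}{n^{\sigma_x+it}\log n}\;-\;\Im\sum_{p\leq x^3}\frac{A_F(1,p)}{p^{1/2+it}}
\]
plus the two error terms from Theorem~\ref{theorem 1 spectral}. Splitting $n=p^k$ and truncating at $k\leq 7$: by \eqref{bound3}, $\sum_{k\geq 8}\sum_{p^k\leq x^3}|C_F(p^k)|\Lambda_x(p^k)/(p^{k/2}\,k\log p)\ll 1$, which supplies the last $O(1)$ error term of the lemma.

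For $k=1$, the crucial algebraic identity
\[
\frac{\Lambda_x(p)}{p^{\sigma_x+it}\log p}-\frac{1}{p^{1/2+it}}\;=\;\frac{\Lambda_x(p)}{\log p}\bigl(p^{-\sigma_x-it}-p^{-1/2-it}\bigr)+\frac{\Lambda_x(p)-\Lambda(p)}{p^{1/2+it}\log p},
\]
obtained by adding and subtracting $\Lambda_x(p)/(p^{1/2+it}\log p)$, splits the $k=1$ contribution into the first error term of the lemma and a shift error with $\Lambda_x$-weight. Using $p^{-\sigma_x}-p^{-1/2}=-\int_{1/2}^{\sigma_x}\log p\cdot p^{-\sigma}\mathrm{d}\sigma$, this shift error simplifies to $-\int_{1/2}^{\sigma_x}\sum_p A_F(1,p)\Lambda_x(p)p^{-\sigma-it}\mathrm{d}\sigma$. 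For each $k\in\{2,\ldots,7\}$, since $\mathcal{M}_F(t)$ contains no prime-power piece, shifting $\sigma_x\to 1/2$ directly produces the second error term of the lemma plus a shift error of the analogous form $-\int_{1/2}^{\sigma_x}\sum_{p^k}C_F(p^k)\Lambda_x(p^k)p^{-k(\sigma+it)}\mathrm{d}\sigma$.

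The main tool for packaging these shift errors, together with the $(\sigma_x-1/2)\bigl|\sum_n\ldots\bigr|$ error from Theorem~\ref{theorem 1 spectral}, into the third error term of the lemma is the exact identity
\[
p^{-k\sigma}\;=\;k\log(xp)\cdot x^{k(\sigma-1/2)}\int_\sigma^\infty x^{k(1/2-u)}p^{-ku}\,\mathrm{d}u,
\]
verified by direct computation via $\int_\sigma^\infty(xp)^{-ku}\mathrm{d}u=(xp)^{-k\sigma}/(k\log(xp))$. Substituting this at $\sigma=\sigma_x$ into the Theorem~\ref{theorem 1 spectral} error and at each $\sigma\in[1/2,\sigma_x]$ (pointwise) into the shift errors, pulling the absolute value inside by the triangle inequality, and enlarging the $u$-integral from $[\sigma,\infty)$ down to $[1/2,\infty)$ produces precisely the integrand $x^{k(1/2-u)}\bigl|\sum_{p^k\leq x^3}C_F(p^k)\Lambda_x(p^k)\log(xp)\,p^{-k(u+it)}\bigr|$ demanded by the lemma. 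The outer $(\sigma_x-1/2)$ factor comes either from $\int_{1/2}^{\sigma_x}|\cdot|\mathrm{d}\sigma\leq(\sigma_x-1/2)\max|\cdot|$ or is already present in the Theorem~\ref{theorem 1 spectral} error, and the prefactor $x^{k(\sigma_x-1/2)}$ follows from $x^{k(\sigma-1/2)}\leq x^{k(\sigma_x-1/2)}$ on the interval, with the factor $k\leq 7$ absorbed into the implied constant.

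The main obstacle is the careful bookkeeping of shift errors and substitutions: once the algebraic identity and the integral representation above are in hand, each individual step reduces to a routine manipulation, but it is easy to misplace factors of $k$, $\log x$, or $\log p$, and slightly different decompositions give subtly different forms that may not match the lemma. In particular, one must ensure that in the $k=1$ case the $(\Lambda_x-\Lambda)$-piece feeds the \emph{first} error term at $\sigma=1/2$ while the $\Lambda_x$-weighted shift feeds the \emph{third} error term, rather than the other way around.
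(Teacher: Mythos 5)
Your proposal is correct and follows essentially the same route as the paper's proof: subtract $\pi\mathcal{M}_F(t)$ from Theorem~\ref{theorem 1 spectral}, split $n=p^k$, truncate at $k\le 7$ via Kim--Sarnak, peel off the $\Lambda_x-\Lambda$ piece and the $\sigma_x\to 1/2$ shift at $k=1$ and the shift alone at $2\le k\le 7$, and then express the resulting Dirichlet polynomials via the integral identity $p^{-k\sigma}=k\log(xp)\,x^{k(\sigma-1/2)}\int_{\sigma}^{\infty}x^{k(1/2-u)}p^{-ku}\,\mathrm{d}u$ with the enlargements $x^{k(\sigma-1/2)}\le x^{k(\sigma_x-1/2)}$ and $\int_\sigma^\infty\le\int_{1/2}^\infty$. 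The only small omission is that the $(\sigma_x-1/2)\big|\sum_{n\le x^3}\cdots\big|$ error from Theorem~\ref{theorem 1 spectral} must also have its $k\ge 8$ tail truncated, but since that tail is $O(1)$ and $\sigma_x-1/2=O(1)$, this too is absorbed into the final $O(1)$.
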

\begin{proof}
By Theorem \ref{theorem 1 spectral},
\begin{equation*}
\begin{split}
\mathcal{R}_F(t)=&\frac{1}{\pi}\Im\sum_{p\leq x^3}
\frac{C_F(p)(\Lambda_x(p)p^{1/2-\sigma_x}-\Lambda(p))}{p^{1/2+\mathrm{i}t}\log p}
+\frac{1}{\pi}\Im\sum_{k=2}^\infty\sum_{p^k\leq x^3}\frac{C_F(p^k)\Lambda_x(p^k)}{p^{k(\sigma_x+\mathrm{i}t)}\log p^k}\\
&+O\bigg((\sigma_x-1/2)\bigg|\sum_{k=1}^\infty\sum_{p^k\leq x^3} \frac{C_F(p^k)
\Lambda_x(p^k)}{p^{k(\sigma_x+\mathrm{i}t)}}\bigg|\bigg)
+O\big((\sigma_x-1/2)\log(|t|+M_F)\big).
\end{split}
\end{equation*}
Using the bound \eqref{bound3}, we deduce that
\begin{equation*}
\begin{split}
\sum_{k=8}^\infty\sum_{p^k\leq x^3}\frac{C_F(p^k)\Lambda_x(p^k)}{p^{k(\sigma_x+\mathrm{i}t)}}=O(1),\quad\text{and}\quad
\sum_{k=8}^\infty\sum_{p^k\leq x^3}\frac{C_F(p^k)\Lambda_x(p^k)}{p^{k(\sigma_x+\mathrm{i}t)}\log p}=O(1).
\end{split}
\end{equation*}
Thus,
\begin{equation*}
\begin{split}
\mathcal{R}_F(t)=&\,\frac{1}{\pi}\Im\sum_{p\leq x^3}
\frac{A_F(1,p)(\Lambda_x(p)-\Lambda(p))}{p^{1/2+\mathrm{i}t}\log p}
+\frac{1}{\pi}\Im\sum_{k=2}^7\sum_{p^k\leq x^3}\frac{C_F(p^k)\Lambda_x(p^k)}{p^{k(1/2+\mathrm{i}t)}\log p^k}\\
&+\frac{1}{\pi}\Im\sum_{k=1}^7\sum_{p^k\leq x^3}
\frac{C_F(p^k)\Lambda_x(p^k)(p^{k(1/2-\sigma_x)}-1)}{p^{k(1/2+\mathrm{i}t)}\log p^k}\\
&+O\bigg((\sigma_x-1/2)\bigg|\sum_{k=1}^7\sum_{p^k\leq x^3} \frac{C_F(p^k)
\Lambda_x(p^k)}{p^{k(\sigma_x+\mathrm{i}t)}}\bigg|\bigg)\\&+O\big((\sigma_x-1/2)\log(|t|+M_F)\big)+O(1).
\end{split}
\end{equation*}
Note that for $1/2\leq a\leq \sigma_x$ and $k\geq1$,
\begin{equation*}
\begin{split}
\bigg|\sum_{p^k\leq x^3}
\frac{C_F(p^k)\Lambda_x(p^k)}{p^{k(a+\mathrm{i}t)}}\bigg|
&=kx^{k(a-1/2)}\bigg|\int_a^\infty x^{k(1/2-\sigma)}\sum_{p^k\leq x^3}
\frac{C_F(p^k)\Lambda_x(p^k)\log(xp)}{p^{k(\sigma+\mathrm{i}t)}}\mathrm{d}\sigma\bigg|\\
&\leq kx^{k(\sigma_x-1/2)}\int_{1/2}^\infty x^{k(1/2-\sigma)}\bigg|\sum_{p^k\leq x^3}
\frac{C_F(p^k)\Lambda_x(p^k)\log(xp)}{p^{k(\sigma+\mathrm{i}t)}}\bigg|\mathrm{d}\sigma.
\end{split}
\end{equation*}
Thus,
\begin{equation*}
\begin{split}
&\bigg|\sum_{p^k\leq x^3}
\frac{C_F(p^k)\Lambda_x(p^k)(p^{k(1/2-\sigma_x)}-1)}{p^{k(1/2+\mathrm{i}t)}\log p^k}\bigg|
=\bigg|\int_{1/2}^{\sigma_x} \sum_{p^k\leq x^3}
\frac{C_F(p^k)\Lambda_x(p^k)}{p^{k(a+\mathrm{i}t)}}\mathrm{d}a\bigg|\\
&\leq k(\sigma_x-1/2)x^{k(\sigma_x-1/2)}\int_{1/2}^\infty x^{k(1/2-\sigma)}\bigg|\sum_{p^k\leq x^3}
\frac{C_F(p^k)\Lambda_x(p^k)\log(xp)}{p^{k(\sigma+\mathrm{i}t)}}\bigg|\mathrm{d}\sigma,
\end{split}
\end{equation*}
and
\begin{equation*}
\begin{split}
\bigg|\sum_{p^k\leq x^3} \frac{C_F(p^k)
\Lambda_x(p^k)}{p^{k(\sigma_x+\mathrm{i}t)}}\bigg|\leq kx^{k(\sigma_x-1/2)}
\int_{1/2}^\infty x^{k(1/2-\sigma)}\bigg|\sum_{p^k\leq x^3}
\frac{C_F(p^k)\Lambda_x(p^k)\log(xp)}{p^{k(\sigma+\mathrm{i}t)}}\bigg|\mathrm{d}\sigma.
\end{split}
\end{equation*}
Thus we complete the proof of the lemma.
\end{proof}
\begin{lemma}\label{lemma 6 spectral}
Let $t>0$ be fixed and $T$ be any large positive parameter.
For $x=T^{\delta/3}$ with sufficiently small $\delta>0$, we have
\begin{equation}\label{M_F(t) moment spectral}
\begin{split}
\frac{1}{\mathcal{H}}\sum_{F}\frac{h_{T,M}(\mu_F)}{\mathcal{N}_F}\mathcal{M}_F(t)^n
=C_n(\log\log T)^{\frac{n}{2}}
+O_{t,n}\big((\log\log T)^{\frac{n-1}{2}}\big)
\end{split}
\end{equation}
and
\begin{equation}\label{R_F(t) moment spectral}
\begin{split}
\frac{1}{\mathcal{H}}\sum_{F}\frac{h_{T,M}(\mu_F)}{\mathcal{N}_F} |\mathcal{R}_F(t)|^{2n}
=O_{t,n}(1),
\end{split}
\end{equation}
where the $O$-term depends only on $t$ and $n$, $C_n$ is defined by
\begin{equation*}
C_n=\begin{cases}
\frac{n!}{(n/2)!(2\pi)^{n}}, \,&\textit{if }\, n\,\, \text{is even},\\
0, \,& \textit{if }\, n \,\,\text{is odd}.
\end{cases}
\end{equation*}
\end{lemma}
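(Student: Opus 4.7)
The overall plan is to treat $\mathcal{M}_F(t)$ as a sum of ``pseudo-Gaussian'' contributions $A_F(1,p)p^{-1/2-it}$ and compute its moments by expanding, linearizing products via the Hecke relations (Lemma \ref{lemma 4 spectral}), and applying the spectral summation formula (Theorem \ref{application 1}). The bound on the $2n$-th moment of $\mathcal{R}_F(t)$ will follow from the structural decomposition in Lemma \ref{lemma 5 spectral} combined with the $\sigma_x$-moment control of Lemma \ref{lemma 6} and Theorem \ref{application 1} used again for Dirichlet polynomial second moments.

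For \eqref{M_F(t) moment spectral}, set $X := \sum_{p \leq x^3} A_F(1,p) p^{-1/2-it}$, so that $\mathcal{M}_F = (X - \bar X)/(2\pi i)$, and expand
\begin{equation*}
\mathcal{M}_F^n = \frac{1}{(2\pi i)^n} \sum_{k=0}^n \binom{n}{k}(-1)^{n-k} X^k \bar X^{n-k}.
\end{equation*}
Each product $X^k \bar X^{n-k}$ becomes a multi-sum over tuples $(p_1,\ldots,p_k;q_1,\ldots,q_{n-k})$; I would use Lemma \ref{lemma 4 spectral} to rewrite $\prod_i A_F(1,p_i)\prod_j A_F(q_j,1)$ as a linear combination of $\overline{A_F(m_1,m_2)} A_F(n_1,n_2)$, then invoke Theorem \ref{application 1} to pick out the diagonal $\delta_{(n_1,n_2)=(m_1,m_2)}\mathcal{H}$. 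The principal diagonal consists of tuples in which all $p_i$ and $q_j$ are pairwise distinct and $\{p_i\}=\{q_j\}$ as multisets; this forces $k=n-k$ (so odd moments vanish) and, for $n=2m$, contributes $m!\big(\sum_{p\leq x^3}1/p\big)^m = m!(\log\log T)^m + O((\log\log T)^{m-1})$. Reassembling with the binomial factor and the sign yields exactly the Gaussian constant $C_n$. Coincident-prime diagonals lose at least one factor of $\log\log T$, and the Theorem \ref{application 1} error, once weighted by $\prod 1/\sqrt{p_i q_j} \ll x^{3n/2}$ and divided by $\mathcal{H}\asymp T^3M^2$, is $O(T^{-c})$ provided $\delta>0$ is taken small enough in terms of $n$.

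For \eqref{R_F(t) moment spectral}, Lemma \ref{lemma 5 spectral} writes $\mathcal{R}_F(t)$ as a sum of five types of terms, so by Minkowski it suffices to bound the $2n$-th moment of each. The short prime polynomial $\sum_{x<p\leq x^3}A_F(1,p)(\Lambda_x(p)-\Lambda(p))/(p^{1/2+it}\log p)$ and the prime-power polynomials over $p^k\leq x^3$ with $2\leq k\leq 7$ have $2n$-th moments $O(1)$: after Hecke linearization and Theorem \ref{application 1}, the diagonal reduces to $\big(\sum_p 1/p^k\big)^n$ or $\big(\sum_{x<p\leq x^3}1/(p\log^2 p)\big)^n$, both bounded. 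The term $(\sigma_x-1/2)\log(|t|+M_F)$ is handled by Cauchy--Schwarz: Lemma \ref{lemma 6} with $k=1$ gives $\langle(\sigma_x-1/2)^{4n}\rangle \ll (\log T)^{-4n}$, which balances the trivial bound $\log M_F\ll \log T$. Finally, the integral term is attacked by Cauchy--Schwarz in the $F$-average: separate the $\sigma_x$-dependent factor $(\sigma_x-1/2)x^{k(\sigma_x-1/2)}$ from $\int_{1/2}^\infty x^{k(1/2-\sigma)}|D_k(\sigma)|\mathrm{d}\sigma$, control the former by Lemma \ref{lemma 6}, bound the latter by Hölder in $\sigma$ against the $4n$-th moment of $D_k(\sigma)$ at each fixed $\sigma\geq 1/2$ (obtained uniformly from Theorem \ref{application 1}), and integrate against the decaying weight $x^{k(1/2-\sigma)}$.

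The principal obstacle is the integral term, because $\sigma_x$ is a spectral quantity that couples nonlinearly with the Dirichlet polynomial through the factor $x^{k(\sigma_x-1/2)}$. Lemma \ref{lemma 6} is stated with the doubled exponents $4n$ and $4nk$ precisely to permit a Cauchy--Schwarz decoupling, at the price of working inside the range $0<\delta<3\theta/(8nk+3)$; since $k$ ranges up to $7$ and $n$ is fixed, this forces a small but admissible choice of $\delta$. Tracking this constraint together with the analogous smallness required in the Theorem \ref{application 1} error analysis pins down the final value of $\delta=\delta(n,t)$, after which all contributions absorb and both asymptotics \eqref{M_F(t) moment spectral} and \eqref{R_F(t) moment spectral} follow.
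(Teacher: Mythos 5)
Your outline coincides with the paper's proof in all the structural steps: expand $\mathcal{M}_F^n$, linearize via Lemma \ref{lemma 4 spectral} and Hecke multiplicativity, apply Theorem \ref{application 1} to extract the diagonal, and bound $\mathcal{R}_F$ via Lemma \ref{lemma 5 spectral} together with Cauchy--Schwarz against Lemma \ref{lemma 6} and H\"older on the $\sigma$-integral.

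However, there is a genuine gap in your treatment of the coincident-prime contributions to \eqref{M_F(t) moment spectral}. You assert that ``coincident-prime diagonals lose at least one factor of $\log\log T$,'' but this is false as stated. Take the configuration in which one prime $p$ has multiplicities $(m(p),n(p))=(2,0)$ and the remaining $r-1$ primes all have $(1,1)$; this is coincident, yet the local factor at $p$ is $\sum_p 1/p^{(m+n)/2}=\sum_p 1/p\asymp\log\log T$, the same size as at the $(1,1)$-primes, and counting indices gives $n=2r$, so such a term would contribute $(\log\log T)^{n/2}$ — the \emph{same} order as the main term — if it produced a diagonal in Theorem \ref{application 1}. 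What rescues the argument is what the paper isolates as Case I: reading off the set $E_n$ in Lemma \ref{lemma 4 spectral}, one sees that whenever $m(p)\not\equiv n(p)\pmod 3$ at some prime, the Hecke-linearized product contains \emph{no} term of the form $A_F(m_1,m_2)\overline{A_F(m_1,m_2)}$ whatsoever, so these configurations feed only the $O(T^{-c})$ off-diagonal error. Only after this mod-$3$ sieving does the paper's Case II — where $m(p)\equiv n(p)\pmod 3$ for all $p$ and some multiplicity exceeds $1$, forcing the number of distinct primes to satisfy $r\le\lfloor(n-1)/2\rfloor$ — yield the loss of a $\log\log T$ that you advertise. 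You need to add this mod-$3$ observation; without it your case analysis does not close. The remainder of the proposal — the permutation bookkeeping giving $(2m)!/(m!(2\pi)^{2m})$, the doubled exponents in Lemma \ref{lemma 6} to permit Cauchy--Schwarz decoupling of the $\sigma_x$-dependence, and the ensuing smallness constraint on $\delta$ — agrees with the paper.
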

\begin{proof}
Recall that
\begin{equation*}
\mathcal{M}_F(t)=\frac{1}{\pi}\Im\sum_{p\leq x^3}\frac{C_F(p)}{p^{1/2+\mathrm{i}t}}=
\frac{-\mathrm{i}}{2\pi}\bigg(\sum_{p\leq x^3}\frac{A_F(1,p)}{p^{1/2+\mathrm{i}t}}
-\sum_{p\leq x^3}\frac{\overline{A_F(1,p)}}{p^{1/2-\mathrm{i}t}}\bigg).
\end{equation*}
Set $x=T^{\delta/3}$ for a suitably small $\delta>0$. Thus we have that the general term of
\begin{equation*}
\mathcal{M}_F(t)^n=\frac{(-\mathrm{i})^n}{(2\pi)^n}\bigg(\sum_{p\leq T^{\delta}}\frac{A_F(1,p)}{p^{1/2+\mathrm{i}t}}
-\sum_{p\leq T^{\delta}}\frac{\overline{A_F(1,p)}}{p^{1/2-\mathrm{i}t}}\bigg)^n
\end{equation*}
has the form
\begin{equation}\label{general term spectral}
\frac{A_F(1,p_1)^{m(p_1)}\overline{A_F(1,p_1)^{n(p_1)}}}{p_1^{m(p_1)(1/2+\mathrm{i}t)}(-1)^{n(p_1)}p_1^{n(p_1)(1/2-\mathrm{i}t)}}
\times\cdots\times
\frac{A_F(1,p_r)^{m(p_r)}\overline{A_F(1,p_r)^{n(p_r)}}}{p_r^{m(p_r)(1/2+\mathrm{i}t)}(-1)^{n(p_r)}p_r^{n(p_r)(1/2-\mathrm{i}t)}},
\end{equation}
where $p_1<p_2<\cdots<p_r\leq T^\delta$, $m(p_j)+n(p_j)\geq 1$, and $\sum_{j=1}^r(m(p_j)+n(p_j))=n$.

Now we discuss the contribution from the general term \eqref{general term spectral} in the following three cases.

Case\,(\uppercase\expandafter{\romannumeral 1}):
\emph{In the general term \eqref{general term spectral}, $m(p_{j_0})\not\equiv n(p_{j_0})(\bmod\,3)$ for some $j_0$.}
By Lemma \ref{lemma 4 spectral}, we have
\begin{equation*}
\begin{split}
A_F(1,p_{j_0})^{m(p_{j_0})}=\sum_{k,\ell\geq 0\atop k+\ell\leq m(p_{j_0})}a_{k,\ell}(m(p_{j_0}))A_F(p_{j_0}^k,p_{j_0}^\ell)
\end{split}
\end{equation*}
and
\begin{equation*}
\begin{split}
\overline{A_F(1,p_{j_0})^{n(p_{j_0})}}=\sum_{k,\ell\geq 0\atop k+\ell\leq n(p_{j_0})}a_{k,\ell}(n(p_{j_0}))\overline{A_F(p_{j_0}^k,p_{j_0}^\ell)}.
\end{split}
\end{equation*}
Then we can observe that there does not exist $(k,\ell)$ such that $a_{k,\ell}(m(p_{j_0}))\neq 0$
and $a_{k,\ell}(n(p_{j_0}))\neq 0$. Otherwise, by Lemma \ref{lemma 4 spectral},
there would exist $(k,\ell)\in E_{m(p_{j_0})}\cap E_{n(p_{j_0})}$ and integers $m_h,\,n_h\geq 0$, $h=1,2,3$, such that
\begin{equation*}
\begin{split}
m(p_{j_0})&=m_1+m_2+m_3,\\
n(p_{j_0})&=n_1+n_2+n_3,\\
k&=-m_1+m_2=-n_1+n_2,\\
\ell&=-m_2+m_3=-n_2+n_3.
\end{split}
\end{equation*}
There equations would further lead to
\begin{equation*}
\begin{split}
m(p_{j_0})-n(p_{j_0})&=(m_1-n_1)+(m_2-n_2)+(m_3-n_3)\\
&=3(m_2-n_2)\equiv 0(\bmod \,3),
\end{split}
\end{equation*}
which contradicts the assumption for Case\,(\uppercase\expandafter{\romannumeral 1}).

With the above observation and the Hecke relations, the numerator of this general term \eqref{general term spectral}
can be written as a sum whose terms are all of the form $A_F(m_1,n_1)\overline{A_F(m_2,n_2)}$ where $(m_1,n_1)\neq(m_2,n_2)$.
Since for these terms $\delta_{m_1, m_2} \delta_{n_1, n_2}=0$,
we see from Theorem \ref{application 1} and the estimate \eqref{property2 for h} that the contribution from the general term
\eqref{general term spectral} to the moment \eqref{M_F(t) moment spectral} is
\begin{equation*}
\begin{split}
&\ll_n\frac{1}{\mathcal{H}}\sum_{p_1<p_2<\cdots<p_r\leq T^\delta}p_1^{-\frac{1}{2}(m(p_1)+n(p_1))}\cdots
p_r^{-\frac{1}{2}(m(p_r)+n(p_r))}\\
&\qquad\qquad\qquad\qquad\qquad
\times\bigg((T^{1+2n\delta})^\varepsilon\big(T^{1+n\delta}+T^3+T^{1+2n\delta\vartheta}M^2\big)\bigg)\\
&\ll_n\frac{1}{\mathcal{H}}\bigg(\sum_{p\leq T^\delta}p^{-\frac{1}{2}}\bigg)^r
\big(T^{1+\varepsilon+n\delta(1+2\varepsilon)}+T^{3+\varepsilon+2n\delta\varepsilon}
+T^{1+\varepsilon+2n\delta(\vartheta+\varepsilon)}M^2\big)\\
&\ll_nT^{-2+\varepsilon+n\delta(3/2+2\varepsilon)}M^{-2}+T^{\varepsilon+n\delta(1/2+2\varepsilon)}M^{-2}
+T^{-2+\varepsilon+n\delta(1/2+2\vartheta+2\varepsilon)},
\end{split}
\end{equation*}
which is negligible upon taking $0<\delta<\frac{1}{n}$.

Case\,(\uppercase\expandafter{\romannumeral 2}):
\emph{In the general term \eqref{general term spectral}, $m(p_{j})\equiv n(p_{j})(\bmod\,3)$ for all $j$,
and $m(p_{j_0})\geq2$ or $n(p_{j_0})\geq2$ for some $j_0$.}
In this case, $m(p_{j})+n(p_{j})\geq 2$ for all $j$ and $m(p_{j_0})+n(p_{j_0})\geq 3$ for some $j_0$,
so $n>2r$ and $r<\lfloor\frac{n-1}{2}\rfloor$.
By applying Lemma \ref{lemma 4 spectral} to the numerator of this general term \eqref{general term spectral}
and multiplying out by Hecke relations,
one sees that it is possible to have resulting terms $A_F(m_1,n_1)\overline{A_F(m_2,n_2)}$ where $(m_1,n_1)=(m_2,n_2)$.
Using Theorem \ref{application 1}, the contribution to the moment \eqref{M_F(t) moment spectral} such terms is bounded by
the diagonal term of size $\asymp T^3M^2$. The contribution from the off-diagonal terms is clearly negligible compared
to $T^3M^2$, given that $0<\delta<\frac{1}{n}$.
Using the classical result
\begin{equation}\label{classic bound 2}
\sum_{p\leq x}\frac{1}{p}=\log\log x+O(1),
\end{equation}
we have the contribution from this general term to the moment \eqref{M_F(t) moment spectral} is at most
\begin{equation*}
\begin{split}
&\ll_n\sum_{p_1<p_2<\cdots<p_r\leq T^\delta}p_1^{-\frac{1}{2}(m(p_1)+n(p_1))}\cdots
p_r^{-\frac{1}{2}(m(p_r)+n(p_r))}\\
&\ll_n\bigg(\sum_{p\leq T^\delta}p^{-1}\bigg)^r
\ll_n (\log\log T)^{\lfloor\frac{n-1}{2}\rfloor}.
\end{split}
\end{equation*}

It remains to discuss the following: In the general term \eqref{general term spectral}, $m(p_{j})=n(p_{j})=1$ for all $j$.
Clearly $n$ must be even, say $n=2m$ and so $r=m$.

Case\,(\uppercase\expandafter{\romannumeral 3}):
\emph{$m(p_{j})=n(p_{j})=1$ for all $j$.}
The contribution from these terms to \eqref{M_F(t) moment spectral} is
\begin{align}\label{main contribution}
&\frac{1}{\mathcal{H}}\sum_{F}\frac{h_{T,M}(\mu_F)}{\mathcal{N}_F}
\binom{2m}{m}(m!)(m!)\frac{(-1)^m(-\mathrm{i})^{2m}}{(2\pi)^{2m}}\nonumber\\
&\qquad\sum_{p_1<p_2<\cdots<p_m\leq T^\delta}
\frac{A_F(1,p_1)\overline{A_F(1,p_1)}\cdots A_F(1,p_m)\overline{A_F(1,p_m)}}{p_1\cdots p_m}.
\end{align}
By the Hecke relations, we have
\begin{equation*}
\begin{split}
&A_F(1,p_1)\overline{A_F(1,p_1)}\cdots A_F(1,p_m)\overline{A_F(1,p_m)}\\
&\quad=A_F(1,p_1\cdots p_m)\overline{A_F(1,p_1\cdots p_m)}.
\end{split}
\end{equation*}
Applying Theorem \ref{application 1} and the estimate \eqref{classic bound 2}, we have \eqref{main contribution} equals
\begin{equation*}
\begin{split}
&\frac{1}{\mathcal{H}}\frac{(2m)!}{(2\pi)^{2m}}\sum_{p_1<p_2<\cdots<p_m\leq T^\delta}\frac{1}{p_1\cdots p_m}
\cdot\bigg(\mathcal{H}+O\bigg(T^{1+\varepsilon}(p_1\cdots p_m)+T^{3+\varepsilon}+T^{1+\varepsilon}M^2(p_1\cdots p_m)^{2\vartheta}\bigg)\bigg)\\
=&\frac{(2m)!}{m!(2\pi)^{2m}}\sum_{p_1,p_2,\ldots,p_m\leq T^\delta\atop p_j \,\text{distinct}}\frac{1}{p_1\cdots p_m}
+O\bigg(T^{-2+\varepsilon}M^{-2}
\sum_{p_1,p_2,\ldots,p_m\leq T^\delta\atop p_j \,\text{distinct}}1\bigg)\\
&+O\bigg(T^{\varepsilon}M^{-2}\sum_{p_1,p_2,\ldots,p_m\leq T^\delta\atop p_j \,\text{distinct}}\frac{1}{p_1\cdots p_m}\bigg)
+O\bigg(T^{-2+\varepsilon}
\sum_{p_1,p_2,\ldots,p_m\leq T^\delta\atop p_j \,
\text{distinct}}\frac{1}{(p_1\cdots p_m)^{1-2\vartheta}}\bigg)\\
=&\frac{(2m)!}{m!(2\pi)^{2m}}(\log\log T+O(1))^m
+O\big(T^{m\delta-2+\varepsilon}M^{-2}+T^\varepsilon M^{-2}(\log\log T)^m+T^{2m\delta\vartheta-2+\varepsilon}(\log\log T)^m\big)\\
=&\frac{(2m)!}{m!(2\pi)^{2m}}(\log\log T)^m
+O_m\big((\log\log T)^{m-1}\big).
\end{split}
\end{equation*}
Now the asymptotic formula \eqref{M_F(t) moment spectral} follows from
Cases (\uppercase\expandafter{\romannumeral 1})--(\uppercase\expandafter{\romannumeral 3}).

Next we are ready to prove \eqref{R_F(t) moment spectral}.
Recall that $\sigma_x=\sigma_{x,F}$ depending on $F$.
By Lemma \ref{lemma 5 spectral},
\begin{samepage}
\begin{align}\label{R}
&\frac{1}{\mathcal{H}}\sum_{F}\frac{h_{T,M}(\mu_F)}{\mathcal{N}_F} |\mathcal{R}_F(t)|^{2n}\nonumber\\
\ll&\frac{1}{\mathcal{H}}\sum_{F}\frac{h_{T,M}(\mu_F)}{\mathcal{N}_F}\bigg|\Im\sum_{p\leq x^3}
\frac{A_F(1,p)(\Lambda_x(p)-\Lambda(p))}{p^{1/2+\mathrm{i}t}\log p}\bigg|^{2n}
+\sum_{k=2}^7\frac{1}{\mathcal{H}}\sum_{F}\frac{h_{T,M}(\mu_F)}{\mathcal{N}_F}
\bigg|\Im\sum_{p^k\leq x^3}\frac{C_F(p^k)\Lambda_x(p^k)}{p^{k(1/2+\mathrm{i}t)}\log p}\bigg|^{2n}\nonumber\\
+&\sum_{k=1}^7\frac{1}{\mathcal{H}}\sum_{F}\frac{h_{T,M}(\mu_F)}{\mathcal{N}_F}
(\sigma_x-1/2)^{2n}x^{2nk(\sigma_x-1/2)}\bigg(\int_{1/2}^\infty x^{k(1/2-\sigma)}\bigg|\sum_{p^k\leq x^3}
\frac{C_F(p^k)\Lambda_x(p^k)\log(xp)}{p^{k(\sigma+\mathrm{i}t)}}\bigg|\mathrm{d}\sigma\bigg)^{2n}\nonumber\\
+&\frac{1}{\mathcal{H}}\sum_{F}\frac{h_{T,M}(\mu_F)}{\mathcal{N}_F}(\sigma_x-1/2)^{2n}\big(\log(|t|+M_F)\big)^{2n}
+\frac{1}{\mathcal{H}}\sum_{F}\frac{h_{T,M}(\mu_F)}{\mathcal{N}_F}.
\end{align}
\end{samepage}
For the first two terms in \eqref{R}, we have
\begin{equation*}
\begin{split}
&\bigg|\Im\sum_{p\leq x^3}
\frac{A_F(1,p)(\Lambda_x(p)-\Lambda(p))}{p^{1/2+\mathrm{i}t}\log p}\bigg|^{2n}\\
&\qquad\ll_n\bigg(\sum_{p\leq x^3}
\frac{A_F(1,p)(\Lambda_x(p)-\Lambda(p))}{p^{1/2+\mathrm{i}t}\log p}-\sum_{p\leq x^3}
\frac{\overline{A_F(1,p)}(\Lambda_x(p)-\Lambda(p))}{p^{1/2-\mathrm{i}t}\log p}\bigg)^{2n},
\end{split}
\end{equation*}
and for $k\geq 2$,
\begin{equation}\label{error term 2}
\begin{split}
\bigg|\Im\sum_{p^k\leq x^3}\frac{C_F(p^k)\Lambda_x(p^k)}{p^{k(1/2+\mathrm{i}t)}\log p}\bigg|^{2n}
\ll_n \bigg(\sum_{p^k\leq x^3}\frac{C_F(p^k)\Lambda_x(p^k)}{p^{k(1/2+\mathrm{i}t)}\log p}
-\sum_{p^k\leq x^3}\frac{\overline{C_F(p^k)}\Lambda_x(p^k)}{p^{k(1/2-\mathrm{i}t)}\log p}\bigg)^{2n}.
\end{split}
\end{equation}
For \eqref{error term 2}, we note that
\begin{equation*}
\begin{split}
C_F(p^k)=\alpha_{F,1}(p)^k+\alpha_{F,2}(p)^k+\alpha_{F,3}(p)^k
\end{split}
\end{equation*}
is a symmetric polynomial in $\alpha_{F,j}(p)$, since \eqref{local parameter} says that $1$, $A_F(1,p)$, $A_F(p,1)=\overline{A_F(1,p)}$
are the elementary symmetric polynomials in $\alpha_{F,j}(p)$, the fundamental theorem of symmetric polynomials
(see \cite[pg.\,21, Remark]{Macdonald}) implies
that the symmetric polynomial $C_F(p^k)=\alpha_{F,1}(p)^k+\alpha_{F,2}(p)^k+\alpha_{F,3}(p)^k$ is
a polynomial in $1$, $A_F(1,p)$ and $\overline{A_F(1,p)}$. Then we can use the fact $|\Lambda_x(p)-\Lambda(p)|=O\big(\frac{\log^3p}{\log^2x}\big)$
and the similar argument as Case (\uppercase\expandafter{\romannumeral 1})--(\uppercase\expandafter{\romannumeral 3})
to see that the contributions from first two terms in \eqref{R}
to \eqref{R_F(t) moment spectral} are of $O_{t,n}(1)$.
The contributions from last two terms in \eqref{R} are of $O_{t,n}(1)$ by Cauchy--Schwarz inequality,
the assumption \eqref{relation}, Corollary \ref{corollary} and Lemma \ref{lemma 6}.
For the third term in \eqref{R},
it follows from Cauchy--Schwarz inequality that
\begin{align}\label{after Cauchy}
&\frac{1}{\mathcal{H}}\sum_{F}\frac{h_{T,M}(\mu_F)}{\mathcal{N}_F}
(\sigma_x-1/2)^{2n}x^{2nk(\sigma_x-1/2)}\bigg(\int_{1/2}^\infty x^{k(1/2-\sigma)}\bigg|\sum_{p^k\leq x^3}
\frac{C_F(p^k)\Lambda_x(p^k)\log(xp)}{p^{k(\sigma+\mathrm{i}t)}}\bigg|\mathrm{d}\sigma\bigg)^{2n}\nonumber\\
&\leq \bigg(\frac{1}{\mathcal{H}}\sum_{F}\frac{h_{T,M}(\mu_F)}{\mathcal{N}_F}(\sigma_x-1/2)^{4n}
x^{4nk(\sigma_x-1/2)}\bigg)^{1/2}\nonumber\\
&\times\bigg(\frac{1}{\mathcal{H}}\sum_{F}\frac{h_{T,M}(\mu_F)}{\mathcal{N}_F}\bigg(\int_{1/2}^\infty
x^{k(1/2-\sigma)}\bigg|\sum_{p^k\leq x^3}
\frac{C_F(p^k)\Lambda_x(p^k)\log(xp)}{p^{k(\sigma+\mathrm{i}t)}}\bigg|\mathrm{d}\sigma\bigg)^{4n}\bigg)^{1/2},
\end{align}
where $1\leq k\leq 7$. By H\"{o}lder's inequality with the exponents $4n/(4n-1)$ and $4n$,
\begin{equation*}
\begin{split}
&\bigg(\int_{1/2}^\infty x^{k(1/2-\sigma)}\bigg|\sum_{p^k\leq x^3}
\frac{C_F(p^k)\Lambda_x(p^k)\log(xp)}{p^{k(\sigma+\mathrm{i}t)}}\bigg|\mathrm{d}\sigma\bigg)^{4n}\\
\leq&\,(\int_{1/2}^\infty
x^{k(1/2-\sigma)}\mathrm{d}\sigma\bigg)^{4n-1}\bigg(\int_{1/2}^\infty x^{k(1/2-\sigma)}\bigg|\sum_{p^k\leq x^3}
\frac{C_F(p^k)\Lambda_x(p^k)\log(xp)}{p^{k(\sigma+\mathrm{i}t)}}\bigg|^{4n}\mathrm{d}\sigma\bigg)\\
=&\,\frac{1}{(k\log x)^{4n-1}}\int_{1/2}^\infty x^{k(1/2-\sigma)}\bigg|\sum_{p^k\leq x^3}
\frac{C_F(p^k)\Lambda_x(p^k)\log(xp)}{p^{k(\sigma+\mathrm{i}t)}}\bigg|^{4n}\mathrm{d}\sigma.
\end{split}
\end{equation*}
Note that
\begin{equation*}
\begin{split}
&\bigg|\sum_{p^k\leq x^3}
\frac{C_F(p^k)\Lambda_x(p^k)\log(xp)}{p^{k(\sigma+\mathrm{i}t)}}\bigg|^{4n}\\
&\ll_n\bigg(\sum_{p^k\leq x^3}
\frac{C_F(p^k)\Lambda_x(p^k)\log(xp)}{p^{k(\sigma+\mathrm{i}t)}}+\sum_{p^k\leq x^3}
\frac{\overline{C_F(p^k)}\Lambda_x(p^k)\log(xp)}{p^{k(\sigma-\mathrm{i}t)}}\bigg)^{4n}\\
&\quad+\bigg(\sum_{p^k\leq x^3}
\frac{C_F(p^k)\Lambda_x(p^k)\log(xp)}{p^{k(\sigma+\mathrm{i}t)}}-\sum_{p^k\leq x^3}
\frac{\overline{C_F(p^k)}\Lambda_x(p^k)\log(xp)}{p^{k(\sigma-\mathrm{i}t)}}\bigg)^{4n}.
\end{split}
\end{equation*}
By using the same argument as in proving \eqref{M_F(t) moment spectral}, for $1\leq k\leq 7$, we get
\begin{equation*}
\begin{split}
\frac{1}{\mathcal{H}}\sum_{F}\frac{h_{T,M}(\mu_F)}{\mathcal{N}_F}\bigg|\sum_{p^k\leq x^3}
\frac{C_F(p^k)\Lambda_x(p^k)\log(xp)}{p^{k(\sigma+\mathrm{i}t)}}\bigg|^{4n}
&\ll_{t,n} (\log x)^{8n}.
\end{split}
\end{equation*}
Thus,
\begin{equation*}
\begin{split}
&\frac{1}{\mathcal{H}}\sum_{F}\frac{h_{T,M}(\mu_F)}{\mathcal{N}_F}\bigg(\int_{1/2}^\infty x^{k(1/2-\sigma)}\bigg|\sum_{p^k\leq x^3}
\frac{C_F(p^k)\Lambda_x(p^k)\log(xp)}{p^{k(\sigma+\mathrm{i}t)}}\bigg|\mathrm{d}\sigma\bigg)^{4n}\\
&\ll_{t,n}\, \frac{1}{(k\log x)^{4n-1}}\int_{1/2}^\infty
x^{k(1/2-\sigma)}(\log x)^{8n}\mathrm{d}\sigma\ll_{t,n} (\log x)^{4n}.
\end{split}
\end{equation*}
Inserting this bound to \eqref{after Cauchy} and using Lemma \ref{lemma 6}, we have the third term in \eqref{R}
is of $O_{t,n}(1)$.
This completes the proof of the lemma.
\end{proof}
\noindent\textbf{Proof of Theorem \ref{main-theorem}.}
By the binomial theorem, we have
\begin{equation}\label{after binomia tm}
\begin{split}
S_F(t)^n=\mathcal{M}_F(t)^n+O_n\bigg(\sum_{\ell=1}^n|\mathcal{M}_F(t)|^{n-\ell}|\mathcal{R}_F(t)|^{\ell}\bigg),
\end{split}
\end{equation}
where $\mathcal{M}_F(t)$ and $\mathcal{R}_F(t)$ are defined in \eqref{MFRF}.
For $1\leq\ell\leq n$, we apply the generalized H\"{o}lder's inequality, Corollary \ref{corollary} and
Lemma \ref{lemma 6 spectral} to get
\begin{equation*}
\begin{split}
&\frac{1}{\mathcal{H}}\sum_{F}\frac{h_{T,M}(\mu_F)}{\mathcal{N}_F}|\mathcal{M}_F(t)|^{n-\ell}|\mathcal{R}_F(t)|^{\ell}\\
\ll &\bigg(\frac{1}{\mathcal{H}}\sum_{F}\frac{h_{T,M}(\mu_F)}{\mathcal{N}_F}\bigg)^{\frac{1}{2}}
\bigg(\frac{1}{\mathcal{H}}\sum_{F}\frac{h_{T,M}(\mu_F)}{\mathcal{N}_F} |\mathcal{M}_F(t)|^{2n}\bigg)^{\frac{n-\ell}{2n}}\\
&\times\bigg(\frac{1}{\mathcal{H}}\sum_{F}\frac{h_{T,M}(\mu_F)}{\mathcal{N}_F}|\mathcal{R}_F(t)|^{2n}\bigg)^{\frac{\ell}{2n}}\\
\ll&_{t,n}\,(\log\log T)^{(n-\ell)/2}\ll_{t,n}(\log\log T)^{(n-1)/2}.
\end{split}
\end{equation*}
Here we have used the positivity of $\mathcal{N}_F$ and the fact that $h_{T,M}(\mu_F)\geq 0$ for all $\mathrm{GL}_3$ Hecke--Maass cusp forms $F$.
Then \eqref{main result} follows from \eqref{M_F(t) moment spectral}, \eqref{after binomia tm}
and the above estimate.

For the second assertion of Theorem \ref{main-theorem},
note that the $n$-th moment of $\mu_{TM}$ is
\begin{equation*}
\int_{\mathbb{R}}\xi^n\mathrm{d}\mu_{TM}(\xi)=\bigg(\sum_{F}\frac{h_{T,M}(\mu_F)}{\mathcal{N}_F}
\bigg(\frac{S_F(t)}{\sqrt{\log\log T}}\bigg)^n\bigg)
\bigg/\bigg(\sum_{F}\frac{h_{T,M}(\mu_F)}{\mathcal{N}_F}\bigg).
\end{equation*}
Using \eqref{main result} and Corollary \ref{corollary}, we obtain that for all $n\geq 0$,
\begin{equation*}
\lim_{T\rightarrow \infty}\int_{\mathbb{R}}\xi^n\mathrm{d}\mu_{TM}(\xi)=C_n=
\sqrt{\pi}\int_{\mathbb{R}}\xi^n \exp(-\pi^2\xi^2)\mathrm{d}\xi.
\end{equation*}
Then by the theory of moments in probability theory (see, for example, \cite[Theorem 30.2]{Billingsley}),
we complete the proof of Theorem \ref{main-theorem}.
\section{Conclusion}
\label{sec:conclusion}
In conclusion, we establish an unconditional asymptotic formula
for the moments of $S_{F}(t)$,
where $F$ is a Hecke--Maass cusp form for $\mathrm{SL}_{3}(\mathbb{Z})$
with Langlands parameter in generic position. Our main result, Theorem~\ref{main-theorem},
extends the conditional result of Liu and Liu~\cite{LL},
which assumed the GRH, to an unconditional setting.
This achievement is made possible by applying a novel weighted zero-density estimate for $L(s, F)$
in the spectral aspect, recently established in \cite{SW1}.
Moreover, we demonstrate that the normalized values $S_{F}(t)/\sqrt{\log \log T}$
converge in distribution to a Gaussian random variable as $T \to \infty$.
A central goal of our ongoing work is to extend this unconditional framework
to $\mathrm{SL}_n(\mathbb{Z})$ for all $n\geq3$, thereby providing
an unconditional counterpart to the conditional theorem of Chen \emph{et al.} \cite{CLW}.

\bigskip
\noindent{\bf Acknowledgements}
Q. Sun was partially supported by the National Natural Science Foundation of China
(Grant Nos. 12471005 and 12031008) and
the Natural Science Foundation of Shandong Province (Grant No. ZR2023MA003).
The authors are very grateful to the
referees for their valuable suggestions.

\bibliographystyle{amsplain}

\end{document}